\documentclass[10pt,a4paper]{elsarticle}                                                                            
\usepackage{amsmath,amsfonts,amsthm,amssymb}
\usepackage{natbib}

\usepackage{geometry}                        
\usepackage{graphicx}                          
\usepackage{setspace}  
\usepackage{caption}
\usepackage{listings}
\usepackage{tabularx}
\usepackage{xcolor}
\usepackage{indentfirst}
\usepackage{subcaption}

\usepackage{multirow}
\usepackage{overpic}
\usepackage{diagbox}

\usepackage{verbatim}


\usepackage{enumerate}



\usepackage[normalem]{ulem}

\usepackage{tikz}

\usepackage[title]{appendix}

\usepackage{bm}

\usepackage[font=footnotesize,labelfont=bf]{caption}

\usepackage[linesnumbered,ruled,lined,commentsnumbered]{algorithm2e}



\makeatletter
\@addtoreset{equation}{section}
\makeatother

\setcounter{section}{0}

\newtheorem{theorem}{Theorem}[section]

\newtheorem{remark}{Remark}[section]

\newcommand\dd{\mathrm{d}}
\newcommand\pp{\partial}

\newcommand\x{\bm{x}}
\newcommand\uvec{\mathbf{u}}
\newcommand\X{\mathbf{X}}
\newcommand\y{\bm{y}}

\newcommand\avec{\bm{\xi}}

\begin{document}

 \title{On Lagrangian schemes for porous medium type generalized diffusion equations: a discrete energetic variational approach}

\author[iit]{Chun Liu}
\ead{cliu124@iit.edu}
\author[iit]{Yiwei Wang}
\ead{ywang487@iit.edu}

\address[iit]{Department of Applied Mathematics, Illinois Institute of Technology, Chicago, IL 60616, USA}

\date{}
\begin{abstract}
  In this paper, we present a systematic framework to derive a Lagrangian scheme for porous medium type generalized diffusion equations by employing a discrete energetic variational approach. Such discrete energetic variational approaches are analogous to energetic variational approaches \cite{liu2009introduction, Giga2017} in a semidiscrete level, which provide a basis of deriving the ``semi-discrete equations'' and can be applied to a large class of partial differential equations with energy-dissipation laws and kinematic relations.
  The numerical schemes derived by this framework can inherit various properties from the continuous energy-dissipation law, such as conservation of mass and dissipation of the discrete energy.
As an illustration, we develop two numerical schemes for the multidimensional porous medium equations (PME), based on two different energy-dissipation laws.
We focus on the numerical scheme based on the energy-dissipation law with $\frac{1}{2} \int_{\Omega} |\uvec|^2 \dd \x$ as the  dissipation. Several numerical experiments demonstrate the accuracy of this scheme as well as its ability in capturing the free boundary and estimating the waiting time for the PME in both 1D and 2D.

%

\end{abstract}

\maketitle     

\section{Introduction}

Many mathematical models in physics, chemistry and biology can be viewed as generalized diffusions with different choices of free energy and dissipation functional from an energetic variational viewpoint \cite{Giga2017}. Examples include the porous medium equation (PME) \cite{vazquez2007porous}, Keller-Segel model \cite{keller1970initiation}, Poisson-Nernst-Planck (PNP) \cite{eisenberg2010energy, xu2014energetic} and Cahn-Hilliard equations \cite{cahn1958free, liu2019energetic}.

One simple example of a generalized diffusion equation is given by
\begin{equation}\label{PME}
  \begin{aligned}
    & \rho_t = \nabla \cdot (D(\rho, \x) \nabla \rho), \quad \x \in \Omega \subset \mathbb{R}^d,~ \alpha > 1,~ t > 0, \\
    & \rho(\x, 0) = \rho_0(\x), \quad \x \in \Omega, \\
    & \frac{\pp \rho}{\pp \mathbf{n}} = 0, \quad \x \in \pp \Omega,~ t > 0, \\
  \end{aligned}
\end{equation}
where $\rho$ is a non-negative function, $D(\rho, \x)$ is the diffusion coefficient, $\Omega$ is a bounded domain and $\mathbf{n}$ is the external normal direction in $\pp \Omega$. When $D(\rho, \x) = c \rho^{\alpha - 1}$ ($\alpha > 1$), (\ref{PME}) is the porous medium equation (PME)\cite{vazquez2007porous}.

There has been an increasing interesting in designing Lagrangian methods for diffusion equations \cite{carrillo2009numerical, westdickenberg2010variational, carrillo2016numerical, junge2017fully, carrillo2017numerical, matthes2017convergent, carrillo2018lagrangian}. The Lagrangian methods is particularly suitable for problems involving sharp interface and free boundary, especially for those with singularity. However, it is a common challenge to solve Lagrangian schemes numerically, especially in high spatial dimensions. Moreover, most of previous Lagrangian methods start with the nonlinear PDEs for the Lagrangian maps \cite{carrillo2016numerical, carrillo2017numerical, duan2019numerical}, it might be difficult to choose the proper weak form to preserve the original variational structure.

In this paper, we  present a systematic framework to construct Lagrangian schemes to generalized diffusion equations (\ref{PME}) by a discrete energetic variational approach, which can be easily applied to multiple spatial dimensions.  A discrete energy variational approach is an analogue to energetic variational approaches \cite{liu2009introduction, Giga2017} in a semidiscrete level, which provides a general framework to  derive the ``semi-discrete equations'', a system of ordinary differential equations in time, from a discrete energy-dissipation law. Our approach is quite close to the methods based on gradient flow structure in the $L_2$-Wasserstein metric \cite{carrillo2009numerical, westdickenberg2010variational, carrillo2016numerical, junge2017fully, carrillo2017numerical, matthes2017convergent, carrillo2018lagrangian}, but various energetic variational forms can be utilized in such a framework.   
As an illustration, we develop two numerical schemes for the multidimensional porous medium equation (PME), based on two different energy-dissipation laws.






The PME has a wide application in physical and biological problems, such as the flow of an ideal gas though a porous medium \cite{lacey1982waiting}, radiative transfer theory \cite{larsen1980asymptotic}, biological aggregation \cite{topaz2006nonlocal}, population dynamics \cite{witelski1997segregation},  
and tumor growth \cite{perthame2014hele}.
Theoretical studies \cite{oleinik1958cauchy, kalavsnikov1967formation, knerr1977porous, wilhelm1983quasilinear, aronson1983initially,  vazquez2007porous} have shown that the existence of the solutions to the PME, which will have a compact support at any time $t > 0$ if the initial data has a compact support. The interface between the compact support and zero-region, called the free boundary, will moves outward in a finite speed, known as the \emph{finite speed propagation}. Unlike the heat equation, the solution of the PME could become non-smooth even if the initial data is smooth. Moreover, for a certain initial data, the interface will not move until a finite positive time, called the \emph{waiting time} \cite{knerr1977porous, aronson1983initially}.

From a computational perspective, its lack of regularity and free boundary \cite{alikakos1985pointwise, aronson1983initially, vazquez2007porous} poses challenges for numerical simulations for the PME. For instance,  numerical solutions by standard numerical approaches, such as PCSFE (Predictor-Correction Algorithm and Standard Finite Element) method,  may contain oscillations near the free boundary, which cannot be removed by raising the degree of finite element space and/or by refining spatial meshes \cite{zhang2009numerical}. On the other hand, it is difficult to track the movement of free boundary and estimate the waiting time of the PME in high accuracy. During the past, quite a number of numerical methods have been proposed for the PME \cite{tomoeda1983numerical, gurtin1984coordinate, dibenedetto1984interface, hoff1985linearly, socolovsky1988lagrangian, socolovsky1988numerical, bertsch1990numerical, jager1991solution, monsaingeon2016explicit, jin1998diffusive, cavalli2007high, zhang2009numerical, liu2011high, budd1999self, baines2005moving, baines2006scale,  ngo2017study, ngo2018adaptive}, most of them are Eulerian methods and focus on the one-dimensional case.

A commonly used numerical approach is the interface tracking algorithm \cite{tomoeda1983numerical, gurtin1984coordinate, dibenedetto1984interface, hoff1985linearly, bertsch1990numerical, jager1991solution, monsaingeon2016explicit}, in which the interface equation are solved in Lagrangian coordinate, while the solution inside the numerical support is updated in Eulerian coordinate. However, it is not easy to apply such method to the PME in high spatial dimensions and the case with complex support. 
In order to eliminate the oscillations around the free boundary, some numerical methods from hyperbolic conservation law, such as relaxation scheme \cite{jin1998diffusive, cavalli2007high}, locally discontinuous Galerkin (LDG) \cite{zhang2009numerical} and WENO \cite{liu2011high}, have been applied to the PME successfully.
Since the solution of the PME always steeper near the interface, some adaptive moving mesh methods have proved to be useful in solving the PME \cite{budd1999self, baines2005moving, baines2006scale,  ngo2017study, ngo2018adaptive}, especially in multiple space dimensions. A typical moving mesh method updates the computational mesh according to the monitor function defined by the numerical solution at current time, which might effect the dynamic of evolution. Hence, it might be difficult to track the free boundary and estimate the waiting time in high accuracy within the adaptive moving mesh methods.

The rest of the paper is organized as follows. In section 2, we briefly review the energetic variational approaches for porous medium type generalized diffusions in a continuous level. In section 3, we introduce a discrete energetic variational approach and apply it to construct two numerical schemes for the PME based on two different energy-dissipation laws. The numerical experiments for the PME in one and two spatial dimensions are presented in sections 4.  We focus on the numerical scheme based on the energy-dissipation law with $\frac{1}{2} \int_{\Omega} |\uvec|^2 \dd \x$ as the  dissipation. These numerical results demonstrate the accuracy of our Lagrangian scheme as well as its ability in capturing the free boundary and estimating the waiting time for the PME in both 1D and 2D.


\section{Energetic Variational Approaches}

For a given energy-dissipation law and the kinematic (transport) relations, an energetic variational approach provides a general framework to determine the dynamics of system in a unique and well-defined way, through two distinct variational processes: Least Action Principle (LAP) and Maximum Dissipation Principle (MDP) \cite{liu2009introduction, Giga2017}. This approach is originated from  pioneering work of Onsager \cite{onsager1931reciprocal, onsager1931reciprocal2} and Rayleigh \cite{strutt1871some}, and has been successfully applied to build up many mathematical models for complex fluids \cite{liu2009introduction, sun2009energetic, eisenberg2010energy, Giga2017}.

The starting point of an energetic variational approach is the first and second laws of thermodynamics \cite{Giga2017}, which yields the following energy-dissipation law
\begin{equation}
\frac{\dd}{\dd t} E^{\text{total}}(t) = - 2 \mathcal{D}(t),
\end{equation}
for an isothermal closed system. $E^{\text{total}}$ is the total energy, which is the sum of the Helmholtz free energy $\mathcal{F}$ and the kinetic energy $\mathcal{K}$. $2\mathcal{D}$ is the rate of energy dissipation, which is related to the entropy production in thermodynamics. The Least Action Principle states that the dynamics of a conservative system is determined as a critical point of the action functional $\mathcal{A}(\x) = \int_0^T \mathcal{K} - \mathcal{F} \dd t$ with respect to $\x$ (the trajectory, if applicable) \cite{arnol2013mathematical, Giga2017}, i.e., 
\begin{equation}
  \delta \mathcal{A} =  \int_{0}^T \int_{\Omega_t} (f_{\text{inertial force}} - f_{\text{conservative force}})\cdot \delta \x  \dd \x \dd t,
\end{equation}
where $\Omega_t$ is the physical domain at time $t$. On the other hand, for a dissipative system ($\mathcal{D} \geq 0$), according to Onsager \cite{onsager1931reciprocal, onsager1931reciprocal2},  the dissipative force can be obtained by minimization of the dissipation functional $\mathcal{D}$ with respect to the ``rate'' $\x_t$, known as Maximum Dissipation Principle (MDP), i.e.,
\begin{equation}
\delta \mathcal{D} = \int_{\Omega_t} f_{\text{dissipative force}} \cdot \delta \x_t~ \dd \x.
\end{equation}
Then, according to the force balance (Newton's second law, in which the inertial force plays role of $ma$), we have
\begin{equation}\label{FB}
\frac{\delta A}{\delta \x} = \frac{\delta \mathcal{D}}{\delta \x_t}.
\end{equation}
We refer the reader to \cite{Giga2017} for more detailed description of energetic variational approaches. Here we only focus on the modeling of generalized diffusions by an energetic variational approach. 



Generalized diffusions are concerned with a conserved quantity $\rho(\x, t)$. An energetic variational approach to a diffusion based on a Lagrangian description to the system, in which the system is characterized by a flow map $\x(\X, t): \Omega_0 \rightarrow \Omega_t$ satisfies
\begin{equation*}
\x(\X, 0) = \X,
\end{equation*}
where $\X$ are the Lagrangian coordinates and $\x$ are Eulerian coordinates. For a given flow map $\x(\X, t)$, the velocity $\uvec(\x(\X, t), t)$ of the system is defined by
\begin{equation}
 \uvec(\x(\X, t), t) = \x_t(\X, t),
\end{equation}
and the deformation matrix (the deformation gradient) $F(\X, t)$ is the Jacobian matrix of the flow map $\x(\X, t)$:  
\begin{equation}
F(\X, t) = \nabla_{\X} \x(\X, t),
\end{equation}
which determines the kinematic relations of physical quantities in Lagrangian coordinates. 

For a conserved quantity $\rho(\x, t)$, let $\rho_0(\X)$ be the initial mass density, then the conservation of mass
\begin{equation}\label{con_mass}
\int_{\Omega_t} \rho(\x(t), t) \dd \x = \int_{\Omega_0} \rho_0(\X) \dd \X,
\end{equation}
indicates
\begin{equation}\label{rho_detF}
\rho(\x(\X, t), t) = \rho_0(\X) / \det F(\X, t).
\end{equation}

  In Eulerian coordinates, the conservation of mass (\ref{con_mass}) is equivalent to
  \begin{equation*}
    \begin{aligned}
      \frac{\dd}{\dd t} \int_{\Omega_t} \rho(\x(\X, t), t) \dd \x & =  \frac{\dd}{\dd t} \int_{\Omega_0} \rho(\x(\X, t), t) \det F(\X, t) \dd \X \\
      & = \int_{\Omega_t} \left( \rho_t + \nabla \rho \cdot \uvec + \rho (\nabla \cdot \uvec) \right) \dd \x = 0. \\
    \end{aligned}
  \end{equation*}
Hence, $\rho(\x(\X, t), t)$ satisfies a transport equation
  \begin{equation}\label{Transport}
    \rho_t + \nabla \cdot (\rho \uvec) = 0.
  \end{equation}

One can view (\ref{rho_detF}) as a kind of composition of the flow map $\x(\X, t)$ and the initial density $\rho_0(\X)$, i.e.,
\begin{equation}
  \rho(\x, t) = \rho_0 ~\tilde{\circ}~ \X^{-1}(\x, t) = \frac{\rho_0 ~\circ~ \X^{-1}(\x, t)}{\det F(\X, t)}, 
\end{equation}
where $\X^{-1}: \Omega_t \rightarrow \Omega_0$ is the inverse of the flow map $\x(\X, t)$.

\begin{remark}
  The scalar transport equation $\phi_t + (\uvec \cdot \nabla) \phi = 0$ is equivalent to $\phi(\x(\X, t), t) = \phi_0(\X)$ in the Lagrangian coordinates, which also can be viewed as a composition
  \begin{equation}
    \phi(\x, t) = \phi_0 \circ \X^{-1} (\x, t).
  \end{equation}
  The scalar transport equation is a suitable kinematic for designing a Lagrangian methods for the gradient flow system, such as Allen-Cahn equation. 
\end{remark}

\begin{remark}
The ``initial data'', or ``reference data'', $\rho_0(\X)$ ($\phi_0(\X)$) carries many information of the solutions, which may not be available for general problems. In practice, one may obtain $\rho_0(\X)$ ($\phi_0(\X)$) from other methods, such as those of Eulerian approaches. 
\end{remark}

In a porous medium type generalized diffusion considered in this paper, the overall energy-dissipation law of the system is given by
\begin{equation}\label{Diffusion}
\frac{\dd}{\dd t} \int_{\Omega} \omega(\rho) \dd \x = - \int_{\Omega} \eta(\rho) |\uvec|^2 \dd \x,
\end{equation}
where the kinetic energy $\mathcal{K}$ is neglected, $\omega(\rho)$ is the free energy density, and the energy dissipation $\mathcal{D}$ is taken to be $\frac{1}{2} \int_{\Omega} \eta(\rho) |\uvec|^2$ like the Darcy's law (the friction to the resting media) \cite{Giga2017}.

We first perform the LAP, i.e., compute the variation of $\mathcal{A} = \int_{0}^{T} - \mathcal{F} \dd t$ with respect to $\x(\X, t)$. Direct computation results in
\begin{equation}\label{LAP_Lag}
  \begin{aligned}
    & \delta \mathcal{A} = - \delta \int_{0}^T  \int_{\Omega_0} \omega(\rho_0(X)/ \det F) \det F \dd \X \\
    & = - \int_{0}^T \int_{\Omega_0} \left( - \omega_{\rho} \left( \frac{\rho_0(X)}{\det F} \right) \frac{\rho_0(X)}{\det F} + \omega\left(\frac{\rho_0(X)}{\det F}\right) \right) (F^{-\rm{T}} : \nabla_{\X} \delta \x)\det F  \dd \X, \\
      \end{aligned}
\end{equation}
where $\delta \x$ is the test function.
It can be noticed that even for this simple case, the variational result in Lagrangian coordinates is quite complicated, which involves $F^{-1}$ and $\det F$.

Pull (\ref{LAP_Lag}) back  to Eulerian coordinates and apply the integration by parts, one can get
\begin{equation}\label{LAP}
  \begin{aligned}
    \delta \mathcal{A} &  = - \int_{0}^T \int_{\Omega} \left( - \omega_{\rho} \rho + \omega \right) (\nabla_{\x} \cdot \delta \x)  \dd \x \dd t
     = - \int_{0}^T \int_{\Omega}  \nabla [ \omega_{\rho} \rho - \omega]  \cdot \delta \x  \dd \x \dd t,
    \end{aligned}
\end{equation}
where the boundary term vanishes due to the choice of $\delta \x$.

The MDP can be done by taking variation of $\mathcal{D}$ with respect to $\x_t$, one can easily obtain that 
\begin{equation}
f_{\text{dissipative force}}  = \frac{\delta \mathcal{D}}{\delta \x_t} =  \eta(\rho) \x_t.
\end{equation}

By the force balance ($\frac{\delta \mathcal{A}}{\delta \x} = \frac{\delta \mathcal{D}}{\delta \x_t}$), an energetic variational approach leads to  
\begin{equation}\label{Traj_PME}
\eta(\rho) \uvec = - \nabla p,
\end{equation}
where $p = \omega_{\rho} \rho - \omega$. The force balance equation (\ref{Traj_PME}) defines the velocity in the transport equation (\ref{Transport}). Combining (\ref{Transport}) and (\ref{Traj_PME}), we get a generalized diffusion equation
\begin{equation}\label{Diffusion_eq}
\rho_t = \nabla \cdot \left( \frac{\rho}{\eta(\rho)} \nabla p(\rho) \right),
\end{equation}
which satisfies the energy-dissipation law (\ref{Diffusion}).

For the PME
\begin{equation}\label{PME_1}
\rho_t = \Delta \rho^{\alpha}, \quad \alpha > 1,
\end{equation}
a commonly used energy-dissipation law is  
\begin{equation}\label{EL_orig}
  \frac{\dd }{\dd t} \int_{\Omega} \frac{1}{\alpha - 1} \rho^{\alpha} =  - \int_{\Omega} \rho |\uvec|^2 \dd x,
\end{equation}
which is consistence with the Wasserstein gradient flows \cite{jordan1998variational, Otto2001PME}. Since $\omega(\rho) = \frac{1}{\alpha - 1} \rho^{\alpha}$ and $\eta(\rho) = \rho$, the force balance gives
\begin{equation}\label{FB_Or}
\rho \uvec = - \nabla \rho^{\alpha},
\end{equation}
which in turns yields the original PME (\ref{PME_1}) by (\ref{Diffusion_eq}).

It should be remarked that same governing equations can be obtained by using different $\omega(\rho)$ and $\eta(\rho)$  [see \cite{duan2019numerical} for examples in the PME]. Correspondingly, different numerical schemes can be derived based on different energy-dissipation laws. 
Besides the classical energy-dissipation law (\ref{EL_orig}) and the two used in (\cite{duan2019numerical}), we can employ another energy-dissipation law
\begin{equation}\label{New_Energy}
  \frac{\dd}{\dd t} \int_{\Omega} \omega(\rho) \dd \x = - \int_{\Omega} |\uvec|^2 \dd \x, \qquad \quad   \omega(\rho) =
  \begin{cases}
    &  2 \rho \ln \rho, \qquad \qquad \qquad  ~~\alpha = 2, \\
    & \dfrac{\alpha}{(\alpha-1)(\alpha-2)} \rho^{\alpha-1}, \quad \alpha >2, \\
  \end{cases}
\end{equation}
for the PME with $\alpha \geq 2$ to construct a numerical scheme. Following the above computation, one can verify that, under the energy-dissipation law (\ref{New_Energy}), the force balance results in
\begin{equation}\label{Traj_new_law}
\uvec = - \nabla  \left( \frac{\alpha}{\alpha - 1} \rho^{\alpha - 1} \right),
\end{equation}
which is equivalent to (\ref{FB_Or}) on the compact support of $\rho_0(\X)$. 
We will show that in the following sections the numerical scheme derived from of (\ref{New_Energy}) has an advantage in tracking the free boundary in the PME, especially in multiple spatial dimensional situation. 
\begin{remark}
Although the energy-dissipation law (\ref{New_Energy}) is only defined for the PME with $\alpha \geq 2$, the force balance (\ref{Traj_new_law}), i.e., the trajectory equation, is well defined for $\alpha > 1$. In the later section, we will show that the numerical scheme derived from of (\ref{New_Energy}) also works well for the case that $1 < \alpha < 2$, although it can not be interpreted through (\ref{New_Energy}) in such cases.
\end{remark}

\begin{remark}
  In this paper, we only consider the diffusions satisfy the energy-dissipation law (\ref{Diffusion}). The free energy in (\ref{Diffusion}) can be generalized to more complicated form. For instance, the famous Cahn-Hilliard equation can be viewed as a $H^1$-diffusion satisfies the energy-dissipation law \cite{Giga2017, liu2019energetic}
  \begin{equation}
\frac{\dd}{\dd t} \int_{\Omega} \frac{1}{2} |\nabla \varphi|^2 + \frac{1}{4 \epsilon^2} (\varphi^2 - 1)^2 = - \int_{\Omega} \phi^2 |\uvec|^2 \dd \x,
  \end{equation}
 where $\varphi$ is the conserved quantity satisfies $\varphi_t + \nabla \cdot (\varphi \uvec) = 0$.

\end{remark}

\section{A discrete energetic variational approach and numerical schemes}

In this section, we first introduce the abstract framework of a discrete energetic variational approach, and apply it to construct some Lagrangian schemes for the porous-medium type generalized diffusion equations. As an example, we construct two numerical schemes for the PME based on energy-dissipation laws (\ref{EL_orig}) and (\ref{New_Energy}).

  As mentioned in the beginning, a discrete energetic variational approach is an analogue to an energetic variational approach in a semidiscrete level. For a given continuous  energy-dissipation law, we can write down a discrete energy-dissipation, 
that is 
\begin{equation}\label{discrte_En_1}
\frac{\dd}{\dd t} E_h \left( \bm{\Xi}(t) \right) = - 2 \mathcal{D}_h(\bm{\Xi}(t), \bm{\Xi}'(t)),
\end{equation}
by introducing a spatial discretization, where $\bm{\Xi}(t) = \left( \Xi_1(t), \Xi_2(t), \ldots, \Xi_K(t) \right) \in \mathbb{R}^K$ is the state variable for the discrete energy-dissipation law (\ref{discrte_En_1}). In the following, we only discuss the case that the kinetic energy $\mathcal{K} = 0$ in the continuous energy-dissipation law. 

Similar to a continuous energetic variational approach stated in sect. 2, we can get the governing equation of $\bm{\Xi}(t)$ by performing LAP and MDP.
In a semidiscrete level, LAP corresponds to taking variation of the discrete action functional
$$\mathcal{A}_h(\bm{\Xi}(t)) = - \int_{0}^T E_h(\bm{\Xi}(t)) \dd t$$ with respect to $\bm{\Xi}(t)$, and MDP corresponds to taking variation of the discrete dissipation functional $\mathcal{D}_h (\bm{\Xi}, \bm{\Xi}(t))$ with respect to $\bm{\Xi}^{\prime}(t)$. Hence, 
the force balance results in:
\begin{equation}\label{discret_D_1}
  \frac{\delta  \mathcal{D}_h  }{\delta \bm{\Xi}'}(\bm{\Xi}(t), \bm{\Xi}'(t)) = \frac{\delta \mathcal{A}_h}{\delta \bm{\Xi}} (\bm{\Xi}(t)),
\end{equation}
which is a system of $K$ nonlinear ordinary differential equations, i.e., ``semi-discrete equations'', of $\bm{\Xi}(t)$. Due to the assumption that the dissipation is quadratic in ``rate'' $\bm{\Xi}'(t)$, equations (\ref{discret_D_1}) can be written as
\begin{equation}\label{discret_D_2}
  {\sf D} \left(\bm{\Xi}(t) \right)  \bm{\Xi}'(t)  = \frac{\delta \mathcal{A}_h}{\delta \bm{\Xi}} \left(\bm{\Xi}(t)\right),
\end{equation}
where $ {\sf D} \left( \bm{\Xi}(t)\right)$ is a $K \times K$ matrix. A numerical scheme can be obtained by introducing a proper temporal discretization to (\ref{discret_D_2}).
If ${\sf D} \left( \bm{\Xi}(t) \right)$ is an identical matrix (which is not true in general),
(\ref{discret_D_2}) can be viewed as a fast descent on each component $\Xi_k$.


A discrete energetic variational approach follows the ``discretize-then-variation'' strategy \cite{furihata2010discrete, christiansen2011topics}. 
The idea of ``discretize-then-variation'', or ``discretize-then-minimize'', has been successfully applied to study numerous problems \cite{furihata2010discrete, christiansen2011topics, davis1998finite, wang2017topological, wang2018formation, carrillo2009numerical, carrillo2016numerical, xu2016variational}. In a recent published book \cite{furihata2010discrete}, \citeauthor{furihata2010discrete} show that ``discretize-then-variation'' is a systematic way to derive a structure-preserving numerical methods for a large class of PDE. By a discrete energetic variational approach, we are able to apply this idea to more general problems with energy-dissipation laws and kinematic relations. In general, ``discrete-then-variation'' and ``variation-then-discrete'' may give us different numerical schemes. It is important to notice that the force balance (\ref{FB}) uses the strong form of the variational results \cite{Giga2017}, since the test functions may be in different space. Hence, the numerical scheme derived by a discrete energetic variational approach normally gives a better approximation to the original energy-dissipation law.

\subsection{Lagrangian schemes for generalized diffusions}
  Now we are ready to construct some Lagrangian schemes for porous medium type generalized diffusions with energy-dissipation law (\ref{Diffusion}) by a discrete energetic variational approach.
The procedure can be extended to more complicated type of diffusions, such as Cahn-Hilliard and PNP equations.

For generalized diffusions considered in this paper, we can discretize the energy-dissipation law by approximating the flow map $\x(\X, t)$ directly, and the value of $\rho(\x, t)$ is determined by the kinematic relation (transport equation) (\ref{rho_detF}), i.e., a composition of the flow map $\x(\X, t)$ and the initial data $\rho_0(\X)$:
\begin{equation}
  \rho(\x, t) = \rho_0 ~\tilde{\circ}~ \X^{-1}(\x, t) = \frac{\rho_0(\X)}{\det F(\X, t)}.
\end{equation}
This is the main difference between our numerical approach and most of traditional Eulerian approaches to diffusion equations.
\begin{remark}
  Although the original free energy possesses certain convexity property with respect to $\rho$, it is not clear whether the convexity is preserved in Lagrangian coordinates, i.e., with respect to the flow map $\x(\X, t)$ in high dimensional situation. 
Hence, for the general case, the evolution of the flow map $\x(\X, t)$ may approach to a ``local'' minimum, which is not necessarily to be the global minimum in theory. 
\end{remark}

In current study, we will discretize $\x(\X, t)$ by a piecewise linear map. An advantage of a piecewise linear approximation to the flow map $\x(\X, t)$ is that the deformation matrix $F(\X,t)$ is piecewise constant (matrix) for give $t$, so are $\det F(\X, t)$ and $F(\X, t)^{-1}$.

One way to construct a piecewise linear approximation to the flow map $\x(\X, t)$ is to use a finite element method \cite{carrillo2018lagrangian}. 
To be more precisely, 
let $\mathcal{T}_h$ be triangulation of $\Omega_0 \in \mathbb{R}^d$. $\mathcal{T}_h$ consists of a set of simplexes $\{ \tau_e ~|~ e = 1,\ldots M \}$ and a set of nodal points $\mathcal{N}_h = \{\X_1, \X_2, \ldots, \X_N \}$. Define the finite element space by
\begin{equation}
V_h = \{  v \in C(\Omega_0) ~|~ v~\text{is linear on each element}~\tau_e \in \mathcal{T}_h  \},
\end{equation}
which is a linear finite element space.
Then the flow map $\x(\X, t)$ can be approximated by
\begin{equation}\label{x_h}
\x_h(\X, t) = \sum_{i = 1}^{N} \avec_i(t) \phi_i(\X) ~\in~ V_h,
\end{equation}
where $\phi_i(\X) : \mathbb{R}^d \rightarrow \mathbb{R}$ is the hat function satisfies $\phi_i(\X_j) = \delta_{ij}$, and $\avec_i(t) \in \mathbb{R}^d$ are coefficients to be determined. Hence,
$$\bm{\Xi}(t) = \left( \xi_1^{(1)}(t), \xi_2^{(1)}(t), \ldots, \xi_N^{(1)}(t), \ldots, \xi_{1}^{(d)},  \xi_{2}^{(d)},  \ldots, \xi_{N}^{(d)} \right)' \in \mathbb{R}^{K},$$
where $K = N \times d$, which is the state variable in (\ref{discrte_En_1}).

Since $\x_h(\X_i, t) = \avec_i$, one can view $\avec_i(t)$ as coordinates in $\Omega_t$ and $\x_h(\X_i, t)$ as a trajectory of a ``particle'' $\X_i$.
The finite element method enables us to compute the deformation matrix $F_h(\X) = \nabla_{X} \x_h $ explicitly for given $\bm{\Xi}$. 
We can write down the deformation matrix $F_h(\X)$ as a $d \times d$-matrix-valued function of $\bm{\Xi}$ on each element $\tau_e$, denoted by
$$F_e(\bm{\Xi}) = \nabla_{\X} \x_h  \big|_{\X \in \tau_e}.$$
The admissible set of $\bm{\Xi}$ is
\begin{equation}
F_{ad}^{\bm{\Xi}} = \left\{ \bm{\Xi} \in \mathbb{R}^K ~|~ \det F_e(\bm{\Xi}) > 0, \quad e = 1, \ldots M \right\}.
\end{equation}
 Correspondingly, the admissible set for $\x_h$ is defined by $$F_{ad}^{\bm{\x}_h} = \left \{ \x_h(\X, t) = \sum_{i = 1}^{N} \avec_i(t) \phi_i(\X) ~|~  \bm{\Xi}(t) \in F_{ad}^{\bm{\Xi}} \right \}.$$
 The non-negativity of $\rho(\x_h(\X, t), t)$ is naturally preserved as if $\x_h(\X, t)$ is in the admissible set $F_{ad}^{\x_h}$. It can be noticed that $F_{ad}^{\bm{\Xi}}$ is not a convex set when $d \geq 2$, which imposes difficulty in numerical analysis.


In the following, we restrict ourselves in a two dimensional case,
the numerical schemes in other spatial dimensions follow easily from this. 
In order to simplify the notation, we let $\avec_i(t) = (a_i(t), b_i(t))$, and denote
$$\bm{a}(t) = (a_1(t), a_2(t), \ldots a_N(t))', \quad \bm{b}(t) = (b_1(t), b_2(t), \ldots, b_N(t))'.$$
Hence, $\bm{\Xi(t)} = (\bm{a}(t), \bm{b}(t))$.
Let $N(i)$ be all the indices $e$ such that $\X_i$ is contained in $\tau_e$ for given $\X_i \in \mathcal{N}_h$. 
The support of $\phi_i$ is denote by $G(i) = \cup_{e \in N(i)} \tau_e$.
For each element $\tau_e$, the nodes of $\tau_e$ is denoted by $\X^e_1$, $\X^e_{2}$, $\X^e_3$,
the global index of the nodes of $\tau_e$ are denoted by $en(e, l)$ ($l = 1, 2, 3$).

Substituting (\ref{x_h}) into (\ref{Diffusion}), 
we get a discrete action functional
\begin{equation}\label{Action_h}
\begin{aligned}
  \mathcal{A}_h (\bm{a}(t), \bm{b}(t)) & = -  \int_{0}^{T} \int_{\Omega_0} \omega \left( \frac{\rho_0(\X)}{\det F(\X, t)} \right) \det F(\X, t) \dd \X \dd t \\
  & = - \int_{0}^T\sum_{e = 1}^{M} \int_{\tau_e}  \omega \left( \frac{\rho_0(\X)}{\det F_e(t)} \right) \det F_e(t) \dd \X \dd t,  \\
\end{aligned}
\end{equation}
and the discrete dissipation
\begin{equation}\label{Dis_h}
\begin{aligned}
  & \mathcal{D}_h(\bm{a}'(t), \bm{b}'(t), \bm{a}(t), \bm{b}(t)) = \frac{1}{2} \int_{\Omega} \eta(\rho) |\x_t|^2 \det F(\X, t) \dd X,  \\
  & = \frac{1}{2} \sum_{e = 1}^M \int_{\tau_e} \eta \left(\frac{\rho_0(\X)}{\det F_e(t)} \right) \left(  \left( \sum_{j = 1}^N a_j'(t) \phi_j(\X) \right)^2 + \left( \sum_{j = 1}^N b_j'(t) \phi_j(\X) \right)^2 \right) \det F_e(t)  \dd \X, \\
\end{aligned}
\end{equation}
where $$F_e(t) := F_e(\bm{a}(t), \bm{b}(t)) = \nabla_{\X} \x_h  |_{\X \in \tau_e}$$  is the deformation matrix $F(\X, t)$ on each element $\tau_e$ at $t$, which can be written down as a function of $a_{en(e, l)}(t)$ and $b_{en(e, l)}(t)$ ($l = 1, 2, 3$) explicitly [see Appendix A]. 

By taking variation of (\ref{Action_h}) with respect to $a_i(t)$ and $b_i(t)$, we have
\begin{equation}\label{dAction_h}
  \begin{aligned}
    & \frac{\delta \mathcal{A}_h}{\delta a_i}(\bm{a}(t), \bm{b}(t)) = -  \sum_{e \in N(i)} \frac{\pp}{\pp a_i} \int_{\tau_e} \omega \left( \frac{\rho_0(X)}{\det F_e(t)} \right) \det F_e(t) \dd \X, \\
    & \frac{\delta \mathcal{A}_h}{\delta b_i}(\bm{a}(t), \bm{b}(t)) = -  \sum_{e \in N(i)} \frac{\pp}{\pp b_i} \int_{\tau_e} \omega \left( \frac{\rho_0(X)}{\det F_e(t)} \right) \det F_e(t) \dd \X, \\
  \end{aligned}
\end{equation}
where
\begin{equation}\label{d_action_el}
  \begin{aligned}
    \frac{\pp}{ \pp \chi} \int_{\tau_e} \omega \left( \frac{\rho_0(X)}{\det F_e} \right) \det F_e \dd X 
    = \int_{\tau_e} \left( - \omega_{\rho} \rho + \omega \right) \left( F^{-\rm{T}}_e : \frac{\pp F_e}{\pp \chi} \right)\det F_e \dd \X, \\
  \end{aligned}
\end{equation}
on each element $\tau_e$, $\chi = a_{en(e, l)}$ or $b_{en(e, l)}$.

On the meanwhile, taking variations of (\ref{discret_D}) with respect to $a_i'(t)$ and $b_i'(t)$ results in
\begin{equation}\label{dDis_h}
  \begin{aligned}
    & \frac{\delta \mathcal{D}_h}{\delta a_i'}  =  \sum_{e \in N(i)} \int_{\tau_e} \eta(\rho) \left( \sum_{j = 1}^N a_j'(t) \phi_j(\X) \right) \phi_i(\X) \det F_e(t)  \dd \X = M_{ij}(\bm{a}(t), \bm{b}(t)) a_j'(t), \\
    & \frac{\delta \mathcal{D}_h}{\delta b_i'}  =  \sum_{e \in N(i)} \int_{\tau_e} \eta(\rho) \left( \sum_{j = 1}^N b_j'(t) \phi_j(\X) \right) \phi_i(\X)  \det F_e(t) \dd \X = M_{ij}(\bm{a}(t), \bm{b}(t)) b_j'(t), \\
  \end{aligned}
\end{equation}
where Einstein summation convention is used, $\eta(\rho) = \eta \left( \rho_0(\X) / \det F_e(t) \right)$, and $M_{ij}(\bm{a}(t), \bm{b}(t))$ is 
defined by
\begin{equation}\label{def_M_2D}
  M_{ij}(\bm{a}(t), \bm{b}(t)) = \sum_{e \in N(i)} \int_{\tau_e} \eta(\rho) \phi_j(\X) \phi_i(\X) \det F_e(t) \dd \X.
\end{equation}


Hence, the force balance equation (\ref{discret_D_1}) results in the ``semi-discrete equations''
\begin{equation}\label{discret_D}
  \begin{aligned}
    & M_{ij}(\bm{a}(t), \bm{b}(t)) a_j'(t) = \frac{\delta \mathcal{A}_h}{\delta a_i}(\bm{a}(t), \bm{b}(t)), \\
    & M_{ij}(\bm{a}(t), \bm{b}(t)) b_j'(t) = \frac{\delta \mathcal{A}_h}{\delta b_i}(\bm{a}(t), \bm{b}(t)), \\
  \end{aligned}
\end{equation}
for $i = 1, 2, \ldots, N$, which can be discretized in time by using some numerical method for systems of ordinary differential equations.  

\begin{remark}
  In this special case, ${\sf D}(\bm{\Xi}(t), \bm{\Xi}'(t))$ in (\ref{discret_D_2}) is given by   
  \begin{equation}\label{def_D}
{\sf D} (\bm{\Xi}(t), \bm{\Xi}'(t)) =
\begin{pmatrix}
  {\sf M} (\bm{a}(t), \bm{b}(t)) & \bm{0} \\
  \bm{0}  & {\sf M}(\bm{a}(t), \bm{b}(t)) \\
\end{pmatrix},
\end{equation}
which is a symmetric matrix. From a computational point of view, $ {\sf D} \left( \bm{\Xi}(t), \bm{\Xi}'(t) \right)$ plays a important role in maintaining the integrity of the flow map in the evolution process.
\end{remark}

Although both (\ref{dAction_h}) and (\ref{dDis_h}) involve the numerical integration over each element, they can be computed out by centroid method (known as midpoint method in 1D). As $F(\X, t)$, $\det F(\X, t)$ and $F(\X, t)^{-1}$ are approximated in a piecewise constant manner, using high-accuracy numerical quadrature over each element cannot improve the numerical accuracy. This is another advantage of the piecewise linear approximation to the flow map, which is actually quadrature-free and can be applied to a high spatial dimensional case. The computational cost is roughly proportional to the number of nodes (``particle''). One can view our numerical approach as a type of cell-centered Lagrangian scheme, where the momentum is defined at the nodes and the other variables (density, pressure, and specific internal energy) are cell-centered \cite{maire2007cell}.  In the following, we denote
$$\rho_e^0 = \rho_0(\X_c^e), \quad \rho_e(t) = \rho_0(\X_c^e) / \det F_e(t), $$
where $\X_c^e$ is the centroid of $\tau_e$.

In order to get a numerical scheme, we need to introduce a proper temporal discretization to the ``semi-discrete equation'' (\ref{discret_D}).
One can use explicit Euler scheme, and the numerical scheme can be written as
\begin{equation}\label{Explict_2D}
  \begin{aligned}
    & M_{ij}^n(\bm{a}^{n}, \bm{b}^n) \frac{a_j^{n+1} - a_j^n}{\tau} = \frac{\delta \mathcal{A}_h}{\delta a_i} (\bm{a}^n, \bm{b}^n), \\
    & M_{ij}^n(\bm{a}^{n}, \bm{b}^n) \frac{b_j^{n+1} - b_j^n}{\tau} = \frac{\delta \mathcal{A}_h}{\delta b_i} (\bm{a}^n, \bm{b}^n), \\
  \end{aligned}
\end{equation}
where
\begin{equation}\label{M_n}
  M_{ij}^{n}(\bm{a}^{n}, \bm{b}^n) = \sum_{e \in N(i)} \int_{\tau_e} \eta(\rho^{n}_e) \phi_j(\X) \phi_i(\X) \det F_e^{n} \dd \X.
\end{equation}
Although the explicit Euler scheme is simple in the numerical implementation, one have to choose  $\tau$ to be significantly small to ensure $\bm{\Xi}^{n+1}_h \in F_{ad}^{\bm{\Xi}}$ and the dissipation of the discrete energy.

A better approach is to adopt a backward Euler scheme for the temporal discretization, i.e.,
\begin{equation}\label{Scheme_2D}
  \begin{aligned}
    & M_{ij}^*(\bm{a}^{n}, \bm{b}^n) \frac{a_j^{n+1} - a_j^n}{\tau} = \frac{\delta \mathcal{A}_h}{\delta a_i}(\bm{a}^{n+1}, \bm{b}^{n+1}), \\
    & M_{ij}^*(\bm{a}^{n}, \bm{b}^n) \frac{b_j^{n+1} - b_j^n}{\tau} = \frac{\delta \mathcal{A}_h}{\delta b_i}(\bm{a}^{n+1}, \bm{b}^{n+1}), \\
  \end{aligned}
\end{equation}
where $M_{ij}^*(\bm{a}^{n}, \bm{b}^n)$ is defined by
\begin{equation}\label{M_star}
M_{ij}^{*}(\bm{a}^{n}, \bm{b}^n) = \sum_{e \in N(i)} \int_{\tau_e} \eta(\rho^{\gamma_1}_e) \phi_j(\X) \phi_i(\X) \det F^{\gamma_2} \dd \X,
\end{equation} 
and $\gamma_i = n$ or $n+1$. Here we have the choice of taking $\eta(\rho_e(t))$ and $\det F_e(t)$ in 
(\ref{def_M_2D}) explicitly or implicitly,  such that ${\sf M}^{*}$ depends $\bm{a}^{n}, \bm{b}^n$, but is independent with $\bm{a}^{n+1}, \bm{b}^{n+1}$.
The theoretical analysis of the choice are in the progress.

  \begin{remark}
    In general, we cannot have an explicit form for the ``semi-discrete equation'' (\ref{discret_D}) and the numerical scheme (\ref{Scheme_2D}) in high dimensional situations, as (\ref{dAction_h}) and (\ref{def_M_2D}) depend on the triangulation $\mathcal{T}_h$. In practice, $M_{ij}(\bm{a}(t), \bm{b}(t))$ and $\frac{\delta \mathcal{A}_h}{\delta a_i}(\bm{a}(t), \bm{b}(t))$ can be assembled using the standard technique in the finite element methods, that is, summing the results on each element over the mesh  \cite{larson2013finite}. [See Appendix B for the procedure of  numerical implementation.]
  \end{remark}

  The numerical scheme (\ref{Scheme_2D}) can be written as
  \begin{equation}\label{Scheme_2D_1}
    {\sf D}^*_n \frac{\bm{\Xi^{n+1}} - \bm{\Xi^{n}}}{\tau} = - \frac{\delta E_h}{\delta \bm{\Xi}} (\bm{\Xi}^{n+1}),
  \end{equation}
  where
   \begin{equation}
  \begin{aligned}
    E_h(\bm{\Xi}) 
     =  \sum_{e = 1}^{M} \int_{\tau_e}  \omega \left( \frac{\rho_e^0}{\det F_e(\bm{\Xi})} \right) \det F_e(\bm{\Xi}) \dd \X, \quad \bm{\Xi} \in F_{ad}^{\bm{\Xi}},
   \end{aligned}
 \end{equation}
 is the discrete energy,  ${\sf D}^*_n = {\sf D}^*(\bm{\Xi}^n)$ is defined by ({\ref{def_D}}) with ${\sf M} = {\sf M}^{*}$. Although (\ref{Scheme_2D_1}) is highly nonlinear equation in
  $$F_{ad}^{\bm{\Xi}} = \left\{ \bm{\Xi} \in \mathbb{R}^K ~|~ \det F_e(\bm{\Xi}) > 0, \quad e = 1, \ldots M \right\},$$
 as an advantage of our discrete energetic variational approach, we can get a solution of (\ref{Scheme_2D_1}) 
 by solving minimization problem in $F_{ad}^{\bm{\Xi}}$:
  \begin{equation}\label{Mini_Problem}
    {\bm{\Xi}}^{n+1} : = \text{argmin}_{\bm{\Xi} \in F_{ad}^{\bm{\Xi}}} J(\bm{\Xi}),
  \end{equation}
  where 
 \begin{equation}\label{min_problem}
   \begin{aligned}
     J(\bm{\Xi}) & =  \frac{1}{2 \tau} {\sf D}^*_n (\bm{\Xi} - \bm{\Xi^{n}}) \cdot (\bm{\Xi} - \bm{\Xi^{n}}) + E_h(\bm{\Xi}).
   \end{aligned}
 \end{equation} 
Moreover,  we can prove the following theorem for the numerical scheme (\ref{Scheme_2D_1}):
\begin{theorem}
  If ${\sf D}^*_n$ is a  symmetric positive-definite matrix, and $\omega$ satisfies
  \begin{equation}
    \lim_{s \rightarrow 0} \omega \left(\frac{1}{s} \right) s = \infty,  
  \end{equation} 
  then for given $\bm{\Xi}^n \in F_{ad}^{\bm{\Xi}}$, there exists a solution $\bm{\Xi}^{n+1}$ to numerical scheme  (\ref{Scheme_2D_1}) such that the following discrete energy dissipation law holds, i.e.,
\begin{equation}\label{d_energy_dis_our_scheme}
  \frac{E_h(\bm{\Xi}^{n+1}) - E_h(\bm{\Xi}^n)}{\tau} \leq  -  \frac{1}{2 \tau^2} {\sf D}_n^{*} (\bm{\Xi}^{n} - \bm{\Xi}^{n+1}) \cdot (\bm{\Xi}^{n} - \bm{\Xi}^{n+1}) \leq 0.
\end{equation}

\end{theorem}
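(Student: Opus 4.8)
The plan is to prove the theorem by exploiting the variational characterization of the scheme already recorded in (\ref{Mini_Problem})--(\ref{min_problem}). First I would show that the functional $J$ in (\ref{min_problem}) attains its minimum over the admissible set $F_{ad}^{\bm{\Xi}}$ at some interior point $\bm{\Xi}^{n+1}$; then I would verify that this minimizer solves the scheme (\ref{Scheme_2D_1}); and finally I would read off the discrete energy-dissipation law (\ref{d_energy_dis_our_scheme}) directly from minimality. The crucial point that makes this work is that (\ref{Scheme_2D_1}) is precisely the stationarity condition for $J$: a direct computation gives
\begin{equation*}
  \nabla_{\bm{\Xi}} J(\bm{\Xi}) = \frac{1}{\tau} {\sf D}^*_n (\bm{\Xi} - \bm{\Xi}^n) + \frac{\delta E_h}{\delta \bm{\Xi}}(\bm{\Xi}),
\end{equation*}
so that $\nabla_{\bm{\Xi}} J(\bm{\Xi}^{n+1}) = 0$ is equivalent to (\ref{Scheme_2D_1}). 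Hence any interior critical point of $J$ is automatically a solution of the scheme, and it suffices to produce a minimizer lying in the open set $F_{ad}^{\bm{\Xi}}$.

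To establish existence of such a minimizer I would combine a coercivity estimate with a barrier argument that keeps a minimizing sequence away from the degenerate boundary of $F_{ad}^{\bm{\Xi}}$. Coercivity as $|\bm{\Xi}| \to \infty$ follows from the symmetric positive-definiteness of ${\sf D}^*_n$: the quadratic term $\frac{1}{2\tau} {\sf D}^*_n (\bm{\Xi} - \bm{\Xi}^n) \cdot (\bm{\Xi} - \bm{\Xi}^n)$ grows like $|\bm{\Xi}|^2$, which dominates $E_h$ (the latter being bounded below for the free energies considered). The barrier is supplied by the hypothesis $\lim_{s \to 0} \omega(1/s)\, s = \infty$. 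Writing the energy elementwise, each term of $E_h$ has the form $|\tau_e|\, \omega(\rho_e^0 / \det F_e)\, \det F_e$; the substitution $s = \det F_e / \rho_e^0$ turns this into $\rho_e^0\, [\omega(1/s)\, s]$, which tends to $+\infty$ as $\det F_e \to 0^+$. Thus $J(\bm{\Xi}) \to +\infty$ whenever $\bm{\Xi}$ approaches the boundary where some element degenerates. Together with the coercivity at infinity, this shows the sublevel sets of $J$ are contained in a compact subset of the open set $F_{ad}^{\bm{\Xi}}$; since $J$ is continuous there, it attains its infimum at an interior point $\bm{\Xi}^{n+1} \in F_{ad}^{\bm{\Xi}}$.

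With the minimizer in hand, the energy law is immediate. Because $\bm{\Xi}^{n+1}$ minimizes $J$ and $\bm{\Xi}^n \in F_{ad}^{\bm{\Xi}}$ is itself admissible, we have $J(\bm{\Xi}^{n+1}) \leq J(\bm{\Xi}^n) = E_h(\bm{\Xi}^n)$, the last equality holding since the quadratic term of $J$ vanishes at $\bm{\Xi}^n$. Expanding $J(\bm{\Xi}^{n+1})$ and rearranging gives
\begin{equation*}
  E_h(\bm{\Xi}^{n+1}) - E_h(\bm{\Xi}^n) \leq - \frac{1}{2\tau} {\sf D}^*_n (\bm{\Xi}^{n+1} - \bm{\Xi}^n) \cdot (\bm{\Xi}^{n+1} - \bm{\Xi}^n),
\end{equation*}
and dividing by $\tau$ yields (\ref{d_energy_dis_our_scheme}), the final inequality $\leq 0$ being exactly the positive-definiteness of ${\sf D}^*_n$ (the quadratic form being symmetric in $\bm{\Xi}^{n+1} - \bm{\Xi}^n$ versus $\bm{\Xi}^n - \bm{\Xi}^{n+1}$).

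I expect the main obstacle to be the existence step rather than the energy estimate. The admissible set $F_{ad}^{\bm{\Xi}}$ is open and, as noted in the text, non-convex for $d \geq 2$, so standard convexity arguments are unavailable, and one must argue that a minimizing sequence neither escapes to infinity nor collapses an element; the growth condition on $\omega$ is precisely what rules out the latter failure. A secondary technicality is confirming that $E_h$ is bounded below, or at least grows more slowly than the quadratic penalty, for the relevant choices of $\omega$, so that coercivity of $J$ is genuinely controlled by ${\sf D}^*_n$.
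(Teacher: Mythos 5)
Your proposal is correct and follows essentially the same route as the paper's proof: both characterize $\bm{\Xi}^{n+1}$ as a minimizer of $J$ over $F_{ad}^{\bm{\Xi}}$, establish existence by showing the sublevel set $\{J(\bm{\Xi}) \leq E_h(\bm{\Xi}^n)\}$ is compact --- boundedness from the positive-definiteness of ${\sf D}^*_n$ and closedness from the growth condition $\lim_{s\to 0}\omega(1/s)\,s = \infty$, which prevents $\det F_e$ from collapsing to zero --- and then read off the dissipation law from $J(\bm{\Xi}^{n+1}) \leq J(\bm{\Xi}^n) = E_h(\bm{\Xi}^n)$. The only notable difference is cosmetic: you phrase the compactness step as coercivity plus a barrier argument, while the paper verifies boundedness and closedness of the sublevel set directly (citing Lemma 3.1 of \cite{carrillo2018lagrangian}), and your closing remark about needing $E_h$ to grow more slowly than the quadratic penalty is a technicality the paper also glosses over.
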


\begin{proof}

Since a minimizer of the minimization problem (\ref{Mini_Problem}) is the solution of (\ref{Scheme_2D_1}), we only need to show that there exists a minimizer of $J(\bm{\Xi})$ in
\begin{equation*}
F_{ad}^{\bm{\Xi}} = \left\{ \bm{\Xi} \in \mathbb{R}^K ~|~ \det F_e(\bm{\Xi}) > 0, \quad e = 1, \ldots M \right\}.
\end{equation*}
Since for $\forall \bm{\Xi} \in \pp F_{ad}^{\bm{\Xi}}$, $\bm{\Xi} = \infty$, following the proof in the Lemma 3.1 in \cite{carrillo2018lagrangian}, the existence of a minimizer can be obtained by showing the set
\begin{equation}
\mathcal{S} = \{ \bm{\Xi} \in F_{ad}^{\bm{\Xi}} ~|~ J(\bm{\Xi}) \leq E_h(\bm{\Xi}^n)  \}
\end{equation}
is a non-empty compact subset of $\mathbb{R}^K$. Obviously, $\bm{\Xi}^n \in \mathcal{S}$, so $\mathcal{S}$ is non-empty.

In order to show $\mathcal{S}$ is compact, we first show that $S$ is bounded. Since ${\sf D}^*_n$ is positive-definite, there exists $\lambda_1 > 0$ such that $\forall \bm{\Xi} \in \mathcal{S}$
\begin{equation}
|| \bm{\Xi} - \bm{\Xi}^n ||^2 \leq \frac{1}{\lambda_1}  {\sf D}^*_n (\bm{\Xi} - \bm{\Xi^{n}}) \cdot (\bm{\Xi} - \bm{\Xi^{n}}) \leq  \frac{2 \tau}{\lambda_1} \left(E_h(\bm{\Xi}^n) - E_h(\bm{\Xi}) \right)
\end{equation}
Hence, $\mathcal{S}$ is bounded. Next we show $\mathcal{S}$ is closed in $\mathbb{R}^{K}$, it suffices to show that the limit $\widetilde{\bm{\Xi}}$ for any sequence $(\bm{\Xi}^{(k)})_{k=1}^{\infty} \subset \mathcal{S}$ is in $F_{ad}^{\bm{\Xi}}$. For $\forall e \in \{ 1, 2, \ldots M \}$ and all $k$
\begin{equation}
  \begin{aligned}
    E_h(\bm{\Xi}^n) \geq E_h(\bm{\Xi}^{(k)}) \geq  \omega \left( \frac{\rho_e^0}{\det F_e(\bm{\Xi}^{(k)})} \right) \det F_e(\bm{\Xi}^{(k)}) |\tau_e|, 
  \end{aligned}
\end{equation}
where $|\tau_e|$ is the area of element $\tau_e$. Since $ \omega \left( \frac{\rho_e^0}{\det F_e(\bm{\Xi}^{(k)})} \right) \det F_e(\bm{\Xi}^{(k)}) \rightarrow \infty$ if $\det F_e \rightarrow 0$, we can conclude that $\det F_e(\bm{\Xi}^{(k)}) > 0$ is uniformly bounded away from zero. So $\det F_e(\bar{\bm{\Xi}}) > 0$ for all $e$, which means $\widetilde{\bm{\Xi}} \in F_{ad}^{\bm{\Xi}}$.

If $\bm{\Xi}^{n+1}$ is a minimizer in $\mathcal{S}$, we have
\begin{equation}
\frac{1}{2 \tau} {\sf D}^*_n (\bm{\Xi}^{n+1} - \bm{\Xi^{n}}) \cdot (\bm{\Xi} - \bm{\Xi^{n}}) + E_h(\bm{\Xi}^{n+1}) \leq E_h (\bm{\Xi}^{n}),
\end{equation}
which completes the proof.
\end{proof}

\begin{remark}
  In one-dimensional case, due to  $\det F = F$, it is easy to show that $E_h(\bm{\Xi})$ is convex. Hence, we can have a stronger result similar to that in \cite{duan2019numerical}:
  \begin{equation}\label{Energy_dis_1D}
  \begin{aligned} 
    \frac{E_h(\bm{\Xi}^{n}) - E_h(\bm{\Xi}^{n+1})}{\tau} & \geq  \frac{\delta E_h}{\delta \bm{\Xi}} (\bm{\Xi}^{n+1})  \cdot \frac{\bm{\Xi}^{n} - \bm{\Xi}^{n+1}}{\tau}  \\
    & = \frac{1}{\tau^2} {\sf D}^{*}_n  (\bm{\Xi}^{n+1} - \bm{\Xi}^{n}) \cdot (\bm{\Xi}^{n+1} - \bm{\Xi}^{n}) \geq 0, \\
  \end{aligned}
  \end{equation}
  where the first inequality follows the convexity of $E_h(\bm{\Xi})$. Moreover, since $ \bar{F}_{ad}^{\bm{\Xi}}$ is the closed convex set in 1D \cite{duan2019numerical}, we can have a uniquely solvable result of numerical scheme (\ref{Scheme_2D_1}).

  In high dimensional situation ($d \geq 2$), we can not get a uniquely solvable result of numerical scheme (\ref{Scheme_2D_1}) due to the lack of convexity of $J(\bm{\Xi})$ and $E(\bm{\Xi})$ \cite{carrillo2018lagrangian}. 
\end{remark}

\begin{remark}
  Although there exists a (local) minimizer in $ \mathcal{S} \subset F_{ad}^{\bm{\Xi}}$ for $J(\bm{\Xi})$, which is a solution of numerical scheme (\ref{Scheme_2D}) decreases the discrete energy $E_h$, we still need to choose a proper optimization methods to find a minimizer $\bm{\Xi}^{n+1} \in \mathcal{S}$. For the PME in 1D or 2D, numerical tests show that a standard damped Newton's method with fixed step-size is adequate to this purpose. For more general problem, we can adopt optimizaition methods wih line search to find such a minimizer. Since we may only find a local minimizer of $J(\bm{\Xi})$ in $\mathcal{S}$ due to lack of convexity for the general problem, the dynamical evolution of a flow-map based Lagrangian methods might be different from Eulerian methods.
\end{remark}

  \begin{remark}
 The above approach start with the spatial discretization to the flow map (\ref{x_h}), we can also begin with introducing a temporal discretization to the continuous energy-dissipation law (\ref{Diffusion}) by
  \begin{equation}\label{d_energy_dis}
    \frac{E(\x^{n+1}) - E(\x^{n})}{\tau} = - \int_{\Omega} \eta(\rho^{\gamma_1}) \Big|\frac{\x^{n+1}_h - \x^{n}_h}{\tau}\Big|^2 \det F_h^{\gamma_2} \dd \X.
  \end{equation}
  where $\x^{n} = \x(\X, t_n)$, and $\gamma_i = n$ or $n+1$, as in (\ref{M_star}). The RHS of (\ref{d_energy_dis}) can be viewed as an approximation to $\frac{\dd}{\dd t} E(\x(\X, t^{*}))$  for some $t^{*} \in [t^n, t^{n+1}]$, we obtain (\ref{d_energy_dis}) by further approximating 
  \begin{equation}\label{d_dis}
    - \int_{\Omega} \eta(\rho(t^{*})) |\x_t(t^*)|^2 \det F_h(t^{*}) \dd \X \approx  - \int_{\Omega} \eta(\rho^{\gamma_1}) \Big|\frac{\x^{n+1} - \x^{n}}{\tau}\Big|^2 \det F_h^{\gamma_2} \dd \X.
  \end{equation}
Using the spatial discretization (\ref{x_h}), we will have the following discrete energy-dissipation law:
  \begin{equation}\label{new_scheme_2}
    \frac{E_h(\bm{\Xi}^{n+1}) - E_h(\bm{\Xi}^{n})}{\tau} =  - {\sf D}^{*}_n \frac{\bm{\Xi}^{n+1} - \bm{\Xi}^{n}}{\tau} \cdot  \frac{\bm{\Xi}^{n+1} - \bm{\Xi}^{n}}{\tau}, 
  \end{equation}
  where $ {\sf D}^{*}_n = {\sf D}^{*}(\bm{\Xi}_n)$ is same to that in (\ref{Scheme_2D_1}).  Note that
  \begin{equation}
   E_h(\bm{\Xi}^{n+1}) - E_h(\bm{\Xi}^{n}) = \nabla_{\bm{\Xi}} E_h (\bm{\Xi}_c) \cdot (\bm{\Xi}^{n+1} - \bm{\Xi}^n)
  \end{equation}
  for some $\bm{\Xi}^{c} = (1 - c) \bm{\Xi}^{n+1} + c \bm{\Xi}^{n}$. It is straightforward to show that if $\bm{\Xi}^{n+1}  \in  F_{ad}^{\bm{\Xi}}$ satisfies
  \begin{equation}
    {\sf D}^{*}_n \frac{\bm{\Xi}^{n+1} - \bm{\Xi}^{n}}{\tau} = - \nabla_{\bm{\Xi}} E_h(\bm{\Xi}_c),
  \end{equation}
  then $\bm{\Xi}^{n+1}$ satisfies (\ref{new_scheme_2}).

  In our scheme (\ref{Scheme_2D_1}), we approximate $\nabla_{\bm{\Xi}} E_h (\bm{\Xi}_c)$ by $\nabla_{\bm{\Xi}} E_h (\bm{\Xi}^{n+1})$, which causes the difference between (\ref{d_energy_dis_our_scheme}) and (\ref{new_scheme_2}).
  \end{remark}

  \begin{remark}
    We can design a numerical scheme that satisfies (\ref{new_scheme_2}) exactly. Let
    \begin{equation}
      \bm{\Xi}^{n} = \sum_{i = 1}^K \kappa_i^{n} \bm{e}_i, \quad \bm{\Xi}^{n+1} = \sum_{i = 1}^K \kappa_i^{n+1} \bm{e}_i,
    \end{equation}
    where $\bm{e}_i$ is the standard orthonormal basis in $\mathbb{R}^K$.
    Note the RHS of (\ref{new_scheme_2}) can be written as
    \begin{equation}
      - \frac{1}{\tau^2}\sum_{i=1}^K \left(\sum_{j = 1}^K D_{ij}^{*}(\bm{\Xi}^n) (\kappa_j^{n+1} - \kappa_j^n) \right) (\kappa_i^{n+1} - \kappa_i^n).
    \end{equation}
    On the other hand, the LHS of (\ref{new_scheme_2}) can be written as
    \begin{equation}
     \frac{1}{\tau} \sum_{i = 1}^{K} E_h(\bm{\Xi}^n_{(i)}) - E_h(\bm{\Xi}^n_{(i-1)}),
    \end{equation}
    where
    \begin{equation}
      \bm{\Xi}^n_{(0)} = \bm{\Xi}^n, \quad \bm{\Xi}^n_{(i)} = \bm{\Xi}^{n} + \sum_{l = 1}^i (\kappa_l^{n+1} - \kappa_l^{n}) \bm{e}_l.
    \end{equation}
    Hence, direct computation shows that $\bm{\Xi}^{n+1} = \bm{\Xi}^n_{(K)}$ satisfies (\ref{new_scheme_2}) if
    \begin{equation}
      E_h(\bm{\Xi}^n_{(i)}) - E_h(\bm{\Xi}^n_{(i-1)}) = - \frac{1}{\tau}\sum_{i=1}^K \left(\sum_{j = 1}^K D_{ij}^{*}(\bm{\Xi}^n) (\kappa_j^{n+1} - \kappa_j^n) \right) (\kappa_i^{n+1} - \kappa_i^n), \quad i = 1, \ldots, K,
    \end{equation}
    which give us a numerical scheme
    \begin{equation}\label{NewScheme}
        \frac{1}{\tau}\sum_{j = 1}^K D_{ij}^{*}(\bm{\Xi}^{n}) (\kappa_j^{n+1} - \kappa_j^n)  =  -  \frac{ E_h(\bm{\Xi}^n_{(i)}) - E_h(\bm{\Xi}^n_{(i-1)}) }{\kappa_i^{n+1} - \kappa_i^n}, \quad i = 1, \ldots, K.
    \end{equation}
    The scheme (\ref{NewScheme}) preserves the discrete energy-dissipation law (\ref{new_scheme_2}), and might be useful when the variation of $E(\bm{\Xi})$ ($\mathcal{A}(\bm{\Xi})$) cannot be computed efficiently.
However, to our knowledge, the numerical analysis and experiments for such type of scheme is lacking.
We will explore this type of scheme in the future work.    
  \end{remark}

\begin{remark}
  One can also adopt some high-order temporal discretization to the ``semi-discrete equations'' (\ref{discret_D}). Since the resulting numerical scheme might be complicated to deal with, we will study the high-order temporal discretization in the future work.
\end{remark}

\begin{remark}
  In general, discrete-then-variation and variation-then-discrete may give us different numerical scheme. In order to get the numerical schemes (\ref{Explict_2D}) and (\ref{Scheme_2D}), one should substitute (\ref{x_h}) into a particular weak form of the force balance (\ref{Traj_PME}) (strong form of the variational results), and introduce a proper approximation and temporal discretization. A weak form of (\ref{Traj_PME}) can be written as
  \begin{equation}\label{strong_weak}
    \int_{\Omega_0} \eta(\rho) \x_t \cdot \y \det F  \dd \X  = - \int_{\Omega_0} \left( - \omega_{\rho} \rho + \omega \right) (F^{-\rm{T}} : \nabla_{\X} \y) \det F \dd \X,
  \end{equation}
  where $\y$ is a test function, $F = \nabla_{\X} \x$. %
One can get (\ref{Scheme_2D}) by taking the test function $\y = \phi_i$ ($i = 1, \ldots N$) and approximating (\ref{strong_weak}) by
\begin{equation}\label{Weak_FL}
  \int_{\Omega_0} \eta(\rho^{\gamma_1})  \frac{\x_h^{n+1} - \x_h^{n}}{\tau} \cdot \y \det F^{\gamma_2} \dd \X =  - \int_{\Omega_0} \left( - \omega_{\rho} \rho + \omega \right) (F^{-\rm{T}} : \nabla_{\X} \y) \det F^{n+1} \dd \X.
\end{equation}
It should be remarked that we might need to approximate $\det F$ in both side of (\ref{strong_weak}) in different manners (explicitly or implicitly) to get back to (\ref{Scheme_2D}).
    From an energetic variational approach point of view, the test functions may be in different space for the LAP and the MDP. Hence, starting with the force balance (\ref{Traj_PME}) may not give us a structure-preserving Lagrangian scheme without using a proper weak form.
\end{remark}

At the end of this subsection, we briefly talk about the post-process after we obtain the discrete flow map $\x_h(\X, \x)$. According to the kinematic relation, $\rho(\x)$ can be computed by
\begin{equation}
\rho(\x(\X, t), t) = \rho_0(\X) / \det F(\X, t).
\end{equation}
Hence, we can compute the density on each element, i.e, the density in the centroid of each element is by
\begin{equation}\label{rho_e_c}
\rho_h(\x_c^e, t) = \rho_0(\X_c^e) / \det F_e(t),
\end{equation}
where $\x_c^e$ is the centroid of $\x_h(\tau_e)$, while $\X_c^e$ is the centroid of $\tau_e$. For each node $X_i$, since the determinant of the deformation matrix $F$ may be different in different elements contain $\X_i$, 
we can compute the $\rho_h(\x_h(\X_i), t)$ in each nodes by
\begin{equation}
\rho_h(\x_h(\X_i), t) = \rho_0(\X_i) \frac{\sum_{e \in G(i)} |\tau_e|}{\sum_{e \in G(i)} |\tau_e| \det F_e(t)}.
\end{equation}

\subsection{Application in the PME}

The above framework works for any porous medium type generalized diffusions has the energy-dissipation law (\ref{Diffusion}) and the kinematic relation (\ref{Transport}). Next, we apply such framework to develop two numerical schemes for the PME based on energy-dissipation law (\ref{EL_orig}) and (\ref{New_Energy}). The RHS of (\ref{Scheme_2D}) can be computed from (\ref{dAction_h}) and (\ref{d_action_el}) for a given $\omega(\rho)$.  For the LHS in (\ref{Scheme_2D}), we take
\begin{equation}\label{PME_scheme_1}
  M_{ij}^{*}(\bm{a}^{n}, \bm{b}^n) = \sum_{e \in N(i)} \int_{\tau_e} \rho_0(\X) \phi_j(\X) \phi_i(\X) \dd \X,
\end{equation}
for the PME with energy-dissipation law (\ref{EL_orig}), while for the energy-dissipation law (\ref{New_Energy}), we take
\begin{equation}\label{PME_scheme_2}
  M_{ij}^{*}(\bm{a}^{n}, \bm{b}^n) = \sum_{e \in N(i)} \int_{\tau_e}  \phi_j(\X) \phi_i(\X) \det F^{n} \dd \X.
\end{equation}
We call the numerical scheme based on energy-dissipation law (\ref{EL_orig}) as the scheme 1 [(\ref{Scheme_2D}) with (\ref{PME_scheme_1})] and the numerical scheme based on energy-dissipation law (\ref{New_Energy}) as scheme 2 [(\ref{Scheme_2D}) with (\ref{PME_scheme_2})]. One can develop other numerical schemes for the PME by other energy-dissipation laws, such as two used in Ref. \cite{duan2019numerical}.

Due to the degeneracy of the PME at $\rho = 0$, the semi-discrete equation (\ref{discret_D_1}), corresponding to the energy-dissipation law (\ref{EL_orig}), is also degenerate at the region that $\rho_0(\X) = 0$, that is to say, if $\X_k \in \mathbb{R}^2 \backslash \overline{\Omega}_c$, where $\Omega_c$ is the support of $\rho_0(\X)$, then
\begin{equation}
  M_{kj}(\bm{a}(t), \bm{b}(t)) = 0, \quad \frac{\delta \mathcal{A}_h}{\delta a_k}(\bm{a}(t), \bm{b}(t)) = 0, \quad \frac{\delta \mathcal{A}_h}{\delta b_k}(\bm{a}(t), \bm{b}(t)) = 0,
\end{equation}
which means the nodes (``particle'') in such region have no well-defined velocity. 
In the meantime, we can  only derive the PME from the energy-dissipation law (\ref{New_Energy}) on the compact support of $\rho_0(\X)$.
Hence, in the following, we always assume that $\Omega_0$ is the compact support of $\rho_0(\X)$ or $\rho_0(\X) > 0$ on $\Omega_0$, which also guarantees the conditions of Theorem 3.1 hold.

It is a commonly used strategy that solving the PME only within the solution support \cite{tomoeda1983numerical, dibenedetto1984interface, hoff1985linearly, bertsch1990numerical, budd1999self, baines2005moving, ngo2018adaptive, duan2019numerical}, known as the non-embedding approach.
The main challenge of this approach is that the evolution of free boundary has to be tracked explicitly \cite{ngo2018adaptive}.
Our cell-centered Lagrangian schemes enable us to treat the movement of free boundary in a uniform way, as the velocity of the free boundary is also well-defined, which is a a major advantage in high dimensional situations. In later section, we will show that our schemes, especially the scheme based on energy-dissipation law (\ref{New_Energy}) can capture the free boundary of the PME, in both 1D and 2D, without explicitly track the movement of the free boundary.   

\begin{remark}
  Since the corresponding trajectory equation (\ref{Traj_new_law}) of (\ref{New_Energy}) is exactly the equation for the movement of free boundary, we expect
the numerical scheme based on the energy-dissipation law (\ref{New_Energy}) has an advantage in tracking the movement of the free boundary.
 Although the energy-dissipation law (\ref{New_Energy}) is only valid for the PME with $\alpha \geq 2$, our numerical tests show that the scheme 2 also works well for $1 < \alpha < 2$.
  \end{remark}





{\noindent \bf Numerical schemes in 1D:} We can write down our two numerical schemes, based on energy-dissipation law (\ref{EL_orig}) and (\ref{New_Energy}), explicitly in one-dimensional case.
Let $\Omega_0 = [\xi_l, \xi_r]$ be the compact support of $\rho_0(X)$, and  $\xi_l = X_1 < X_2  < \ldots < X_{N} = \xi_r$ be nodes in $\Omega_0$.
We can approximate the flow map $x(X, t)$ by
\begin{equation}
x_h(X, t) = \sum_{i = 1}^{N} a_i(t)  \phi_i(X),
\end{equation}
where $\phi_i(X)$ is a hat function satisfies $\phi_i(X_j) = \delta_{ij}$.
 Let $\bm{a}(t) = (a_1(t), a_2(t), \ldots, a_{N}(t))'$ and $h_i = X_{i+1} - X_i$, 
in one-dimensional case, the admissible set of $\bm{a}$ is simply as
\begin{equation}
F_{ad}^{\bm{a}} = \left \{ \bm{a} = (a_1, a_2, \ldots, a_N)' ~|~ a_1 < a_2 < \ldots < a_N   \right \}.
\end{equation}
The discrete action functional and the discrete dissipation can be written as
\begin{equation}\label{discret_E_1D}
  \begin{aligned}
  & \mathcal{A}_h (\bm{a}(t))  = -  \int_{0}^{T} \sum_{i = 1}^{N-1} \int_{X_i}^{X_{i+1}}  \omega \left( \frac{ \rho_0(X)}{ \det F_i(t) } \right) \det F_i(t) \dd X \dd t, \\
    & \mathcal{D}_h(\bm{a}'(t), \bm{a}(t)) = \frac{1}{2} \sum_{i = 1}^{N-1} \int_{X_i}^{X_{i+1}} \eta \left(\frac{\rho_0(X)}{\det F_i(t)} \right)  \left( \sum_{j = 1}^N a_j'(t) \phi_j(X) \right)^2 \det F_i(t)  \dd \X, \\
  \end{aligned}
  \end{equation}
  where $$\det F_i = (a_{i+1} - a_i)/h_i, \quad i = 1, 2, \ldots N - 1.$$

  For the energy-dissipation law (\ref{EL_orig}), direct computation results in
\begin{equation}
  \begin{aligned}
    \frac{\delta \mathcal{A}_h}{\delta a_i}  (\bm{a}) & =  - \frac{1}{h} \left(  \int_{X_i}^{X_{i+1}} \left(  \frac{\rho_0(X)}{(a_{i+1} - a_i) / h_i}  \right)^{\alpha}   \dd X   - \int_{X_{i-1}}^{X_{i}} \left(  \frac{\rho_0(X)}{(a_i - a_{i - 1}) / h_{i-1}}  \right)^{\alpha}   \dd X \right) \\
                                     & \approx  -  \left( \left(  \frac{\rho_0(X_{i+1/2})}{(a_{i+1} - a_{i}) / h_{i}}  \right)^{\alpha}  -  \left(  \frac{\rho_0(X_{i-1/2})}{(a_i - a_{i - 1}) / h_{i-1}}  \right)^{\alpha}   \right), \\
  \end{aligned}
\end{equation}
and
\begin{equation}
  \begin{aligned}
  \frac{\delta \mathcal{D}_h}{\delta a_i'(t)} (\bm{a}(t), \bm{a}'(t))
    & =   \int_{X_{i-1}} ^{X_{i}} \rho_0(X) \left( \sum_{j = 1}^N a_j'(t) \phi_j(X) \right) \phi_i(X)  \dd \X  \\
    & \quad + \int_{X_i} ^{X_{i+1}} \rho_0(X) \left( \sum_{j = 1}^N a_j'(t) \phi_j(X) \right) \phi_i(X)  \dd \X. \\
    \end{aligned} 
\end{equation}
Hence, in the one-dimensional case, our scheme 1 based on energy-dissipation law (\ref{EL_orig}) can be written as
\begin{equation}\label{scheme1}
M_{ij} \frac{a_{j}^{n+1} - a_{j}^n}{\tau} = -  \left( \left(  \frac{\rho_0(X_{i+1/2})}{(a_{i+1}^{n+1} - a_{i}^{n+1}) / h_i}  \right)^{\alpha}  -  \left(  \frac{\rho_0(X_{i-1/2})}{(a_i^{n+1} - a_{i - 1}^{n+1}) / h_{i-1}}  \right)^{\alpha}   \right)
\end{equation}
where ${\sf M}$ is a triangular matrix, given by
\begin{equation}
  M_{ij} =
  \begin{cases}
    \rho_0(X_{i - 1/2}) h_{i-1} /6, \quad & j  = i - 1,  \\
    \rho_0(X_{i - 1/2}) h_{i-1}/3 +  \rho_0(X_{i + 1/2}) h_{i}/3, \quad & j  = i,  \\
    \rho_0(X_{i + 1/2}) h_i/6, \quad & j  = i + 1,  \\
    0, \quad & \text{otherwise},  \\
  \end{cases}  \quad    1 \leq i, j \leq N,
\end{equation}
where we define $X_{1/2} = 0$, $X_{N+1/2} = 0$ and $h_0 = h_N = h$ to simplify the notation.

\begin{remark}
  In Ref. \cite{duan2019numerical}, the authors develop several numerical schemes for the one-dimensional PME based on different energy-dissipation laws. 
  For the energy-dissipation law (\ref{EL_orig}), their numerical scheme can be written as (scheme 0 in \cite{duan2019numerical})
\begin{equation}
 \rho_0(X_i) \frac{a_{i}^{n+1} - a_{i}^n}{\tau} = -  \frac{1}{h}\left( \left(  \frac{\rho_0(X_{i+1/2})}{(a_i^{n+1} - a_{i - 1}^{n+1}) / h}  \right)^{\alpha}  -  \left(  \frac{\rho_0(X_{i-1/2})}{(a_i^{n+1} - a_{i - 1}^{n+1}) / h}  \right)^{\alpha}   \right).
\end{equation}
Formally, with the equidistant node, our scheme (\ref{scheme1}) only differs from theirs in the temporal discretization for all inner points. 
 A drawback of their temporal discretization is that, in order to prevent the solution to escape from the admissible set, they should use a specific and inefficient damped Newton methods, in which the step-size is roughly proportional to $\min(\rho_0(X_i))$. Our temporal discretization follow the maximum dissipation principle, in which $M(\bm{a})$ control the movement of each nodes. A standard damped Newton with a fixed step-size can prevent the solution to escape from the admissible set in our scheme. Their scheme can only be applied to the region in which $\rho_0 > 0$ (a different scheme is used for the movement of free boundary).
Moreover, the starting point of the numerical approach in Ref. \cite{duan2019numerical} is the force balance (\ref{FB_Or}) in strong form,
it is difficult to extend their approach to a high dimensional situation. Strictly speaking, their numerical schemes  may not preserve the original energy-dissipation law in general.
\end{remark}

Similarly, in the one-dimensional case,  our scheme 2, which is based on energy-dissipation law (\ref{New_Energy}), can be written as
\begin{equation}\label{scheme2}
M_{ij}(\bm{a}^n)\frac{x_{j}^{n+1} - x_{j}^n}{\tau} =  - \frac{\alpha}{\alpha - 1}\left( \left(  \frac{\rho_0(X_{i+1/2})}{(a_i^{n+1} - a_{i - 1}^{n+1}) / h}  \right)^{\alpha-1}  -  \left(  \frac{\rho_0(X_{i-1/2})}{(a_i^{n+1} - a_{i - 1}^{n+1}) / h}  \right)^{\alpha-1}   \right),
\end{equation}
where
\begin{equation}
  M_{ij}(\bm{a}^n) =
  \begin{cases}
    (a_{i}^n - a_{i-1}^n)/6, \quad & j  = i - 1,  \\
    (1 - \delta_{1i})(a_{i}^n - a_{i-1}^n)/3 +  (1 - \delta_{Ni})(a_{i+1} - a_{i})/3, & \quad j  = i,  \\
    (a_{i+1}^n - a_i^n)/6, \quad & j  = i + 1,  \\
    0, \quad & \text{otherwise},  \\
  \end{cases}  \qquad    1 \leq i, j \leq N.
\end{equation}
where we define $X_{1/2} = 0$, $X_{N+1/2} = 0$ and $h_0 = h_N = h$ to simplify the notation.

\section{Numerical Experiments}
In this section, we present some numerical results to the PME in both 1D and 2D to demonstrate the accuracy of our numerical methods. 
We'll focus on our scheme 2, which is based on the energy-dissipation law (\ref{New_Energy}).
The error in space of a numerical solution is measured in $L^2$-norms defined by
  \begin{equation}\label{def_L2}
   ||e_h||^2_2 = \left(\int_{\Omega} e_h^2(\x) \dd \x \right)^{1/2},
  \end{equation}
where $e_h(\x)$ is the difference between the numerical solution $\rho_h(\x)$ and the exact solution $\rho(\x)$.
We compute the numerical integration in (\ref{def_L2}) by the centroid method (known as midpoint method in 1D), which defines the discrete $\mathcal{L}^2$-norm, i.e.,
  \begin{equation}
    ||e_h||^2_{\mathcal{L}^2} =  \left( \sum_{e = 1}^{M} e_h(\x_c^e)^2 |\tau_e| \det F_e  \right)^{1/2},
  \end{equation}
  where $\x_c^e$ is the centroid of $\x_h(\tau_e)$, $\rho_h(\x_c^e)$ is computed by (\ref{rho_e_c}). 

\subsection{One-dimensional problems}
\subsubsection{Barenblatt-Pattle solution}
To verify the accuracy of our numerical methods, we first consider a benchmark solution, the Barenblatt-Pattle solution, which is a exact weak solution for the PME established by Barenblatt \cite{gi1952some} and Pattle \cite{pattle1959diffusion}. The one-dimensional Barenblatt-Pattle solution is given by,
\begin{equation}
B_{\alpha} (x ,t) = t^{-k} \left[ \left(1 - \frac{k(\alpha-1)}{2\alpha} \frac{|x|^2}{t^{2k}}\right)_{+}  \right]^{1/(\alpha-1)}, \quad t > 0,
\end{equation}
where $k = (\alpha + 1)^{-1}$ and $u_{+} = \mathrm{max} (u, 0)$. For any time $t > 0$, this solution has a compact support $[-\xi_{\alpha}(t), \xi_{\alpha}(t)]$ with the interface $|x| = \xi_{\alpha}$ moving outward at a finite speed, where
\begin{equation}
\xi_{\alpha}(t) = \sqrt{\frac{2 \alpha}{k (\alpha - 1)}} t^{k}.
\end{equation}

We take the Barenblatt solution at $t = 1$ , i.e., $B_{\alpha}(x, 1)$, as the initial data, and compare our numerical solution at time $T$ with $B_{\alpha}(x, T+1)$.
Fig. \ref{BP_1D} shows the numerical and exact solutions for $\alpha = 4$ at $T = 1$ and $T = 10$, where the numerical solutions are computed by scheme 1 (\ref{scheme1}) [shown by blue-square] and scheme 2 (\ref{scheme2}) [shown by red-circle] with $\Omega_0$ to be the compact support of the initial data. The results demonstrate that both our numerical schemes can approximate the exact solutions well without oscillation. The numerical solutions by scheme 2 (\ref{scheme2}) approximate the exact solutions better near the interface.  
\begin{figure}[!h]
    \includegraphics[width = \linewidth]{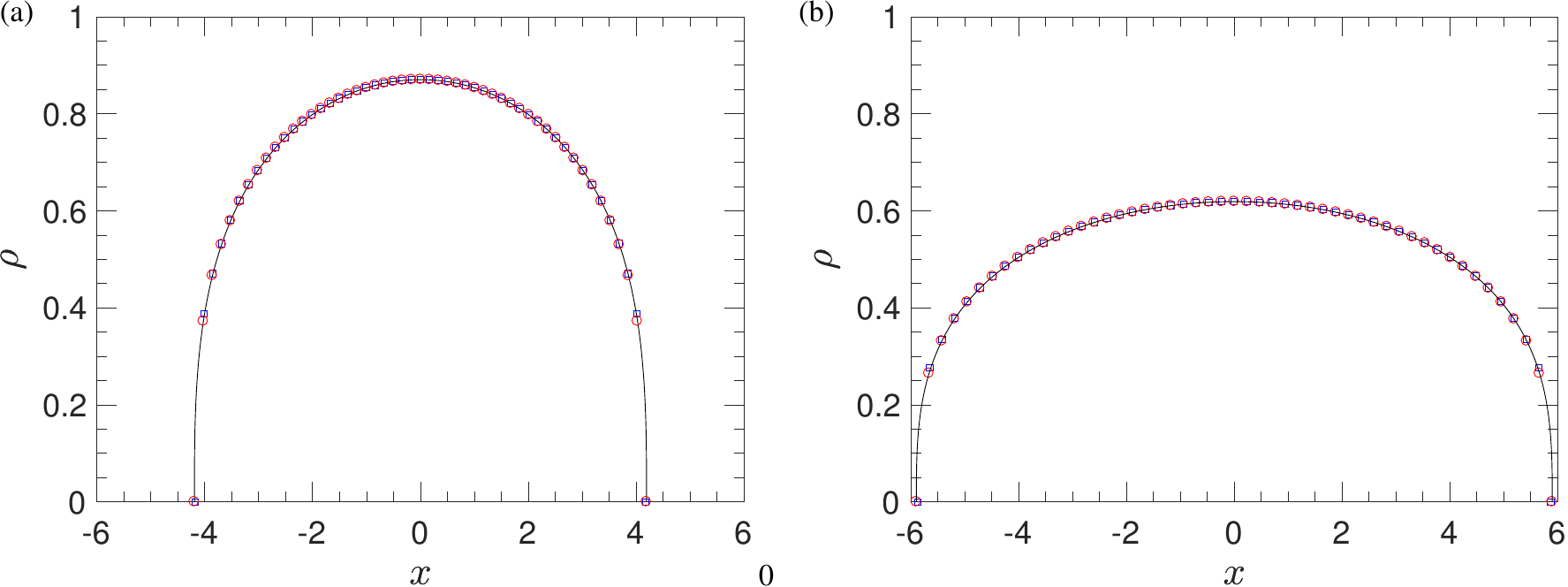}
    \caption{Solution for the PME for $\alpha = 4$, with $\rho_0(X) = B_{\alpha}(x, 1)$ at different time: (a) $T = 1$,  (b) $T = 10$ ($N = 51$, $h \approx 0.14$, $\tau = 0.01$). [Exact: dark line, scheme 1: blue square, scheme 2 : red circle].  }\label{BP_1D} 
\end{figure}



The converge rate of Barenblatt solutions with $\alpha = 3$ and $4$ for both numerical schemes is shown in Table \ref{Table1}. The error at $X = 0$ and the error in $\mathcal{L}^2$-norm is presented. 
For both schemes, the converge rate of the error at $X = 0$ is second order in space since the solution is smooth far away from the interface. The scheme 1 has a first-order converge rate, while the scheme 2 can achieve second-order in $\mathcal{L}^2$-norm at $T = 1$.
When $\alpha$ becomes larger, the converge rate of scheme 2 in $\mathcal{L}^2$-norm can keep in second order.
As expected, the numerical error of scheme 2  is smaller than that of scheme 1, as it track the movement of free boundary better. We can reduce the numerical error of our scheme 1 by tracking the movement of the free boundary explicitly, i.e., replacing the equations of $X_b \in \pp \Omega_0$ with the equation of free boundary.
In the following, we'll focus on our scheme 2, which based on energy-dissipation law (\ref{New_Energy}). All the following numerical solutions are computed by scheme 2.
\begin{table}[!h]
  \begin{center}
    {\scriptsize
  \begin{tabular}{ c | c | c  c  c  c | c  c  c  c }
    \hline
           \multicolumn{2}{c|}{$\alpha = 3$}         &  \multicolumn{4}{c|}{scheme 1} &  \multicolumn{4}{c}{scheme 2}  \\ \hline
                   N         &  $\tau$ &  Error at $X(0)$ & Order   & $L_m^2$-error & Order &  Error at $X(0)$ & Order  & $L_m^2$-error & Order \\ \hline
            $51$    & 1/100  &  2.6421e-04  &         & 0.0036  &         & 9.0421e-05  &         &  3.5109e-04 &         \\ \hline
            $101$   & 1/400  & 7.0619e-05   & 1.9036  & 0.0016  & 1.1699  & 2.2692e-05  & 1.9945  & 9.5494e-05  &   1.8784       \\  \hline  
            $201$   & 1/1600 & 1.9303e-05 & 1.8712  & 7.0440e-04  & 1.1836  & 5.7058e-06  & 1.9917  & 2.5714e-05  & 1.8929   \\  \hline       
           \multicolumn{2}{c|}{$\alpha = 4$}         &  \multicolumn{4}{c|}{scheme 1} &  \multicolumn{4}{c}{scheme 2}  \\ \hline
                   N         &  $\tau$ & Error at $X(0)$ & Order   & $L_m^2$-error & Order &  Error at $X(0)$ & Order  & $L_m^2$-error & Order \\ \hline
            $51$    & 1/100  &  2.6925e-04  &           & 0.0051 &         &   2.3969e-04  &          & 5.0786e-04  &         \\ \hline
            $101$   & 1/400  &  7.6531e-05  &  1.8148   & 0.0027 & 0.9175  &   6.0120e-05  &  1.9953   & 1.4894e-04  & 1.7697   \\  \hline  
            $201$  & 1/1600 &   2.2855e-05  &  1.7435   & 0.0014 & 0.9475  &   1.5073e-05  &  1.9959    & 4.3149e-05  & 1.7873  \\  \hline       
  \end{tabular}
  }
  \caption{The convergence rate of numerical solutions for $\rho_0(X) = B_{\alpha}(X, 1)$ at the finite time T = 1 for $\alpha = 2$ and $\alpha = 4$.}
  \label{Table1}
\end{center}
\end{table}

Fig. \ref{BP_Tra}(a) shows the trajectory of each node for Barenblatt solution with $\alpha = 4$ [$N = 51$, $\tau = 0.01$, scheme 2]. It can be noticed that the final grid is almost uniform. This is because the solution doesn't become steeper during the the time evolution.
We plot in Fig. (\ref{BP_Tra})(b) the evolution of the numerical interface for the Barenblatt solution [$N = 51$, $\tau = 0.01$, scheme 2], with four different parameters $\alpha = 4, 5, 6$ and $8$, from $T = 0$ to $1$, in which the solid line is the position of the exact interface, and the circle indicates the position of the numerical interface. The results show that the exact interface can be approximated in high accuracy.  
\begin{figure}[!h]

   \includegraphics[width = \linewidth]{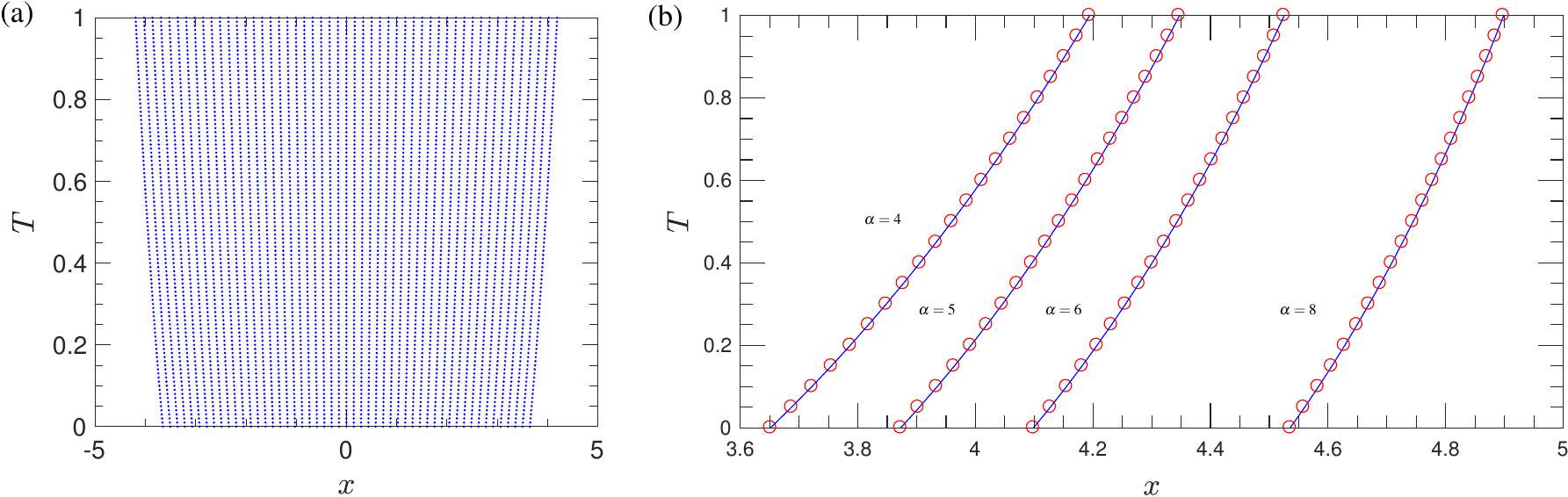}

    \caption{(a) The trajectory of the nodes for Barenblatt solution with $\alpha = 4$. (b) Movement of the interface for the Barenblatt solution: $\alpha = 4, 5, 6, 8$ [Exact: blue line; Numerical: red circle]. }\label{BP_Tra} 
\end{figure}

Quantitatively, we show the error of the right interface of our scheme 2 with different $\alpha$ ($\alpha = 4, 5, 6$ and $8$) at $T = 1$ in Table. \ref{Table2},  which shows that our scheme 2 can track the movement of the free boundary in second-order, even for large $\alpha$. 

\begin{table}[!bh]
  \begin{center}
         \begin{tabular}{ c | c | c | c }
    \hline
    {$\alpha$} &  $N = 51, \tau = 1/100$    &  $N = 101, \tau = 1/400$    & Order\\ \hline
          4    &  3.7241e-04  &  8.9717e-05 & 2.0534 \\ \hline
          5    &  4.4021e-04  &  1.0629e-04 & 2.0502   \\ \hline
          6    &  4.6867e-04   &  1.1329e-04 & 2.0486  \\ \hline
          8    &  4.8008e-04  &  1.1619e-04 & 2.0468   \\ \hline
         \end{tabular}         
  \caption{Numerical Error of right interface at $T = 1$ for initial data $\rho_0(X) = B_{\alpha}(X, 1)$ with different $\alpha$ ($\alpha = 4, 5, 6$ and $8$) at $T = 1$. The numerical solutions are computed by scheme 2.}
  \label{Table2}
\end{center}
\end{table}

\begin{remark}
  Although the scheme 2 is derived by the energy-dissipation law (\ref{New_Energy}), which is valid for $\alpha \geq 2$, numerical tests show that this scheme also works well for the case with $1 < \alpha < 2$.  Fig. \ref{scheme2_test} shows the numerical and exact solutions for $\alpha = 5/3$ and $\alpha = 1.1$ at $T = 1$ with $\rho_0(X) = B_{\alpha}(x, 1)$.
  \begin{figure}[!h]
    \includegraphics[width = \linewidth]{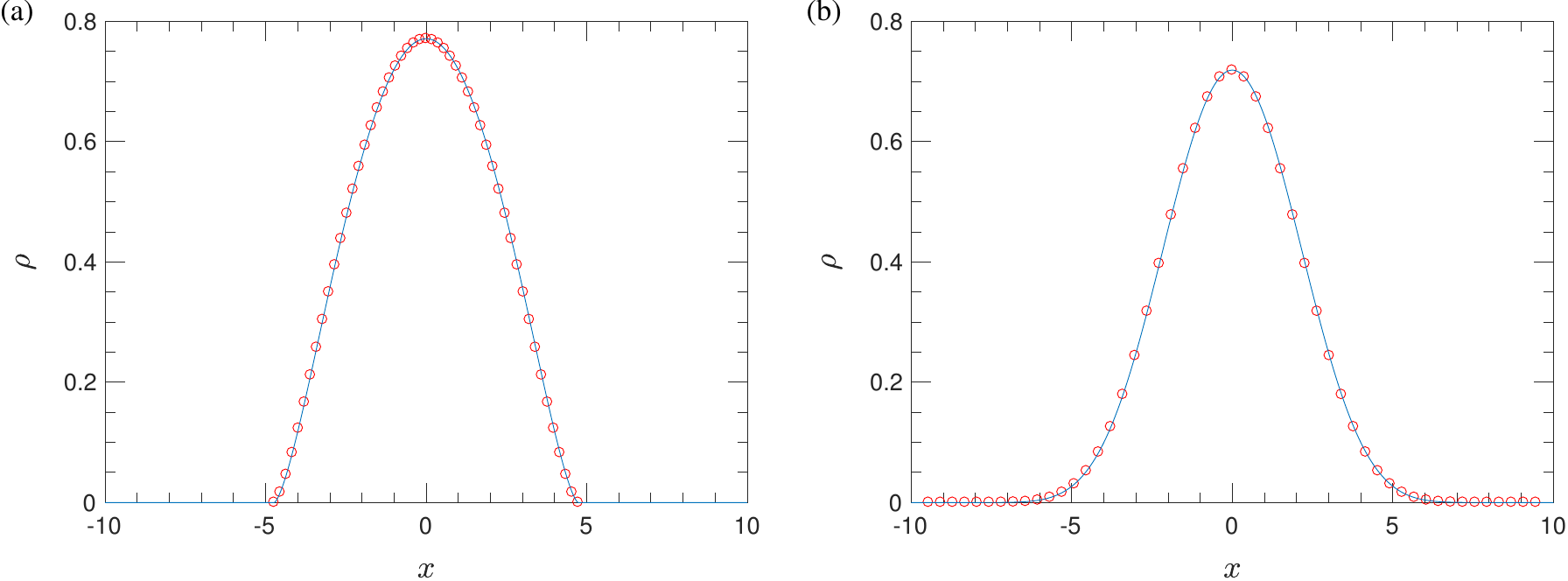}
    \caption{Solution for the PME with $\rho_0(X) = B_{\alpha}(x, 1)$ at $T = 1$ for (a) $\alpha = 5/3$, (b) $\alpha = 1.1$. The exact solutions are shown by solid line, while the numerical solutions computed by scheme 2 are shown by red-circle.}\label{scheme2_test} 
\end{figure}

\end{remark}

\subsubsection{Waiting Time}

 \begin{figure}[!b]
     \includegraphics[width = \linewidth]{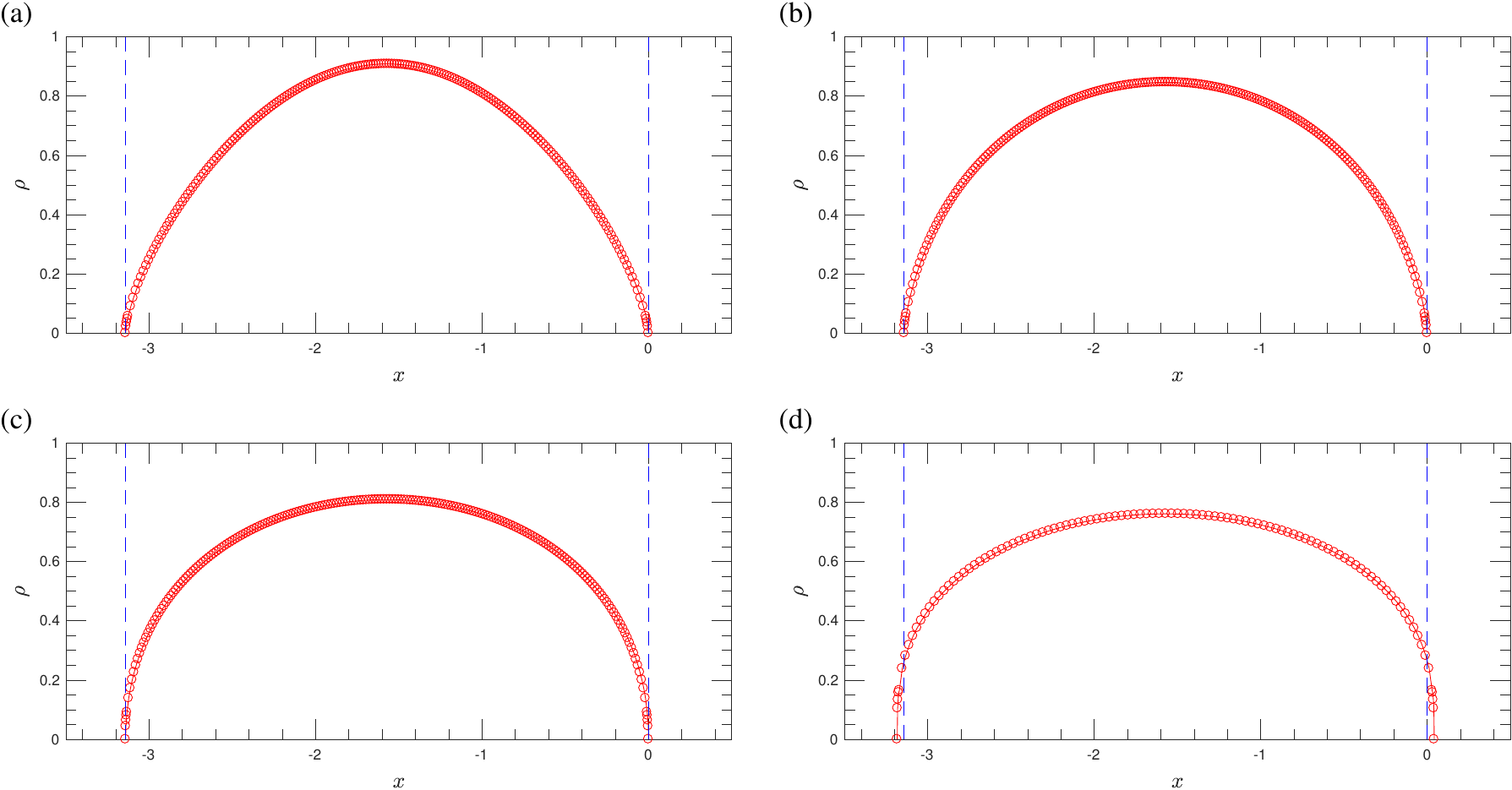}
    \caption{Numerical solution for the initial condition (\ref{ini_v0}) with $\theta = 0$ and $\alpha = 4$ at various time: (a) $t = 0$, (b) $t =0.05$, (c) $t = 0.1$, (d) $t = 0.2$,  in which the first and last element are manually refined ($N = 107$, $\tau = 10^{-4}$). }\label{WT_1D} 
\end{figure}
 Now, we study the waiting time phenomenon of the PME by our numerical methods. The waiting time phenomenon occurs for a certain type of initial data. For instance, if $\rho_0(X)$ satisfies
  \begin{equation}\label{ini_v0}
   \frac{\alpha}{\alpha-1} \rho_0^{\alpha-1}(X) = \begin{cases}
      &  (1 - \theta) \sin^2 X + \theta \sin^4 X , \quad \text{if}~ -\pi \leq X \leq 0, \\
      & 0, \quad \quad \quad \text{otherwise},
    \end{cases}
  \end{equation}
  then there exist a positive waiting time for $0 \leq \theta \leq 1$ \cite{aronson1983initially}. According the theoretical result \cite{aronson1983initially}, for $0 \leq \theta \leq 1/4$, the waiting time for the initial date (\ref{ini_v0}) is $\frac{1}{2 (\alpha + 1)(1 - \theta)}$.

Most of previous studies estimate the waiting time from the trajectory of numerical interface \cite{tomoeda1983numerical, bertsch1990numerical, ngo2018adaptive}, but the numerical waiting time may not be clearly estimated in some cases. 
As an alternative approach, \citeauthor{nakaki2003numerical} estimate the waiting time for the one-dimensional PME by transforming the original equation into another equation whose solution will blow up at the waiting time of the original PME \cite{nakaki2003numerical}. Recently, \citeauthor{duan2019numerical} proposed an elegant criterion to determine the waiting time in one-dimensional case, and they manually set the velocity of interface to be zero before the numerical waiting time. 
We can easily adopt the criterion proposed in \cite{duan2019numerical} into our numerical scheme in one-dimensional case. Fig. \ref{WT_1D} shows the numerical solutions for the initial data (\ref{ini_v0}) with $\theta = 0$ and $\alpha = 4$ at various time, in which we manually refine the first and last element to improve the numerical accuracy ($N = 107, \tau = 10^{-4}$). The numerical waiting time we obtained is $t = 0.1054$, which is consistent with theoretical results ($t = 0.1$).

\begin{figure}[!hb]
    \includegraphics[width = \linewidth]{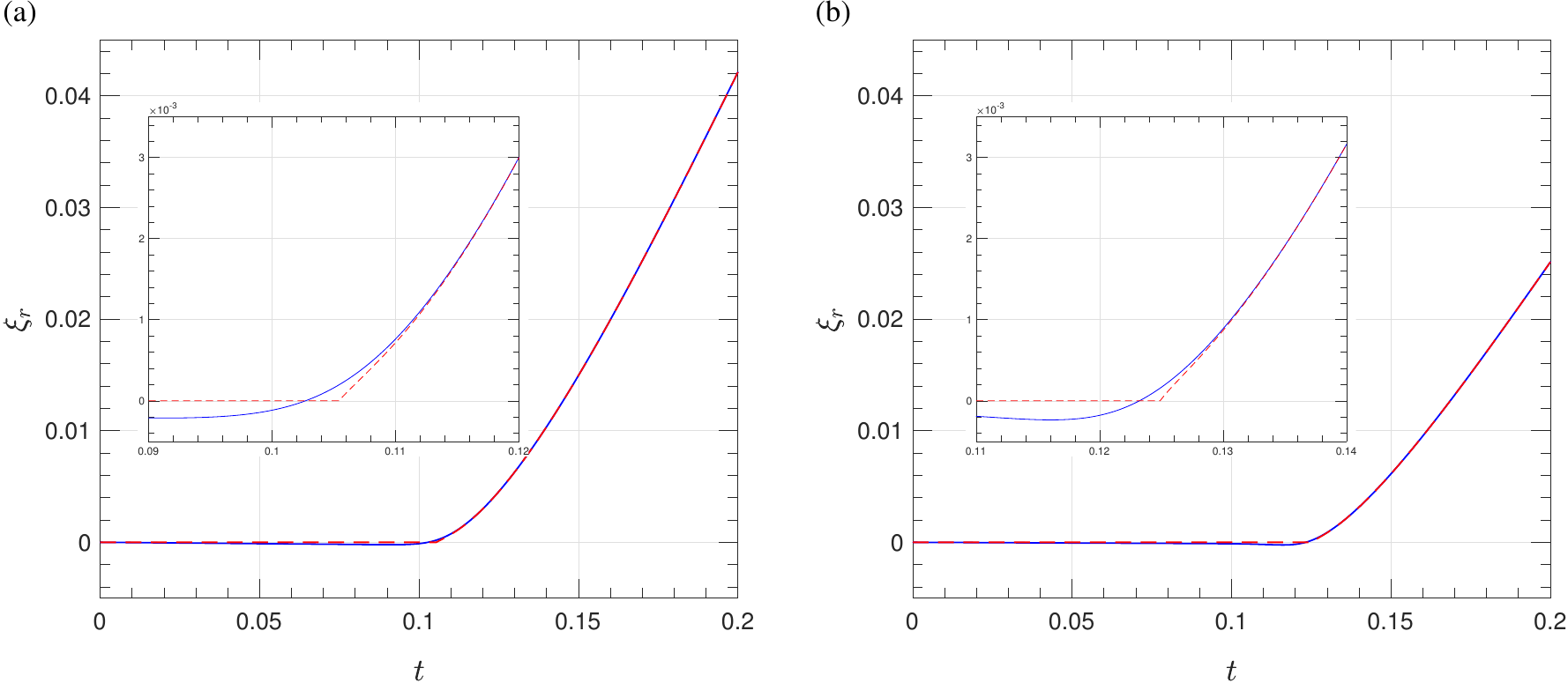}
    \caption{ (a) Numerical right interface computed by our numerical scheme with [red dashed-line] and without [blue solid-line] manually setting the velocity of interface to be zero before the numerical waiting time  for the initial data (\ref{ini_v0}) with $\theta = 0$ and $\alpha = 4$. (b) Numerical right interface computed by our numerical scheme with [red dashed-line] and without [blue solid-line] manually setting the velocity of interface to be zero before the numerical waiting time  for the initial data (\ref{ini_v0}) with $\theta = \frac{1}{2}$ and $\alpha = 7$. }\label{WT_1D_NI} 
\end{figure}

A natural question is whether we can estimate the waiting time from our numerical interface without manually setting the velocity of interface to be zero, as it is difficult to apply similar criterion to the PME in high spatial dimensions. Fig. \ref{WT_1D_NI} (a) shows the right numerical interface computed by our numerical method with and without manually setting the velocity of interface to be zero before the numerical waiting time for the initial data (\ref{ini_v0}) with $\theta = 0$ and $\alpha = 4$. Surprisingly, these two line almost coincide with each other after the numerical waiting time. Moreover, we noticed that the compact support will shrink at beginning due to the numerical approximation. These motivate us to define the numerical waiting time by the time when the numerical support begins to expand, i.e., 
\begin{equation}
t^{*}_{N} = \inf \left\{ t ~|~ |\xi_{b}(t)| > \xi_{b}^0, b = 1 ~\text{and}~ N   \right\},
\end{equation}
where $\xi_{b}(t)$ is the position of left and right interface at $t$ and $\xi_{b}^0$ is the initial position of left and right interface. Although no theoretical justification is available at this point, numerical experiments show that this criterion give a clear estimation to the waiting time for all cases that we tested. Fig. \ref{WT_1D_NI} (b) shows another example for the initial data (\ref{ini_v0}) with $\theta = \frac{1}{2}$ and $\alpha = 7$, in which the numerical interface for both approaches are shown. Both approaches indicate that the waiting time in this case is about $t^{*} \approx  0.124$.

\subsection{2D Simulations}

We now apply our numerical scheme 2, which is based on the energy-dissipation law (\ref{New_Energy}), to the PME in two dimensions. Although the explicit form of the numerical scheme cannot be given in a general mesh, using the explicit form of $F_e$ as a function of $a_{en(e, l)}$ and $b
_{en(e, l)}$ on each element $\tau_e$ (shown in the Appendix), we can compute $\frac{\delta \mathcal{A}_h}{\delta a_i}(\bm{a}^{n+1}, \bm{b}^{n+1})$ ($\frac{\delta \mathcal{A}_h}{\delta b_i}(\bm{a}^{n+1}, \bm{b}^{n+1})$) and ${\sf M}^{*}(\bm{a}^n, \bm{b}^n)$ by Algorithm \ref{Algorithm} and Algorithm \ref{calc_M} in remark 3.6, during the computer implementation.

\subsubsection{Barenblatt-Pattle solution}

We first validate our numerical scheme by studying the 2D Barenblatt-Pattle solution. The Barenblatt-Pattle solution in $d$-dimensions is given by
\begin{equation}\label{2D_B}
B_{\alpha} (\x ,t) = t^{-k} \left[ \left(C_0 - \frac{k(\alpha-1)}{2 d \alpha} \frac{|\x|^2}{t^{2k/d}}\right)_{+}  \right]^{1/(\alpha-1)},
\end{equation}
where $k = (\alpha - 1 + 2/d)^{-1}$, $C_0$ is a constant, related to the initial mass. This solution is radially symmetric, self-similar, and has compact support  $|\x| \leq \xi_{\alpha}(t)$ for any finite time, where
\begin{equation}
\xi_{\alpha} = \sqrt{\frac{2d \alpha C_0}{k (\alpha - 1)}} t^{k/d}.
\end{equation}

 \begin{figure}[!htb]
   \includegraphics[width = \linewidth]{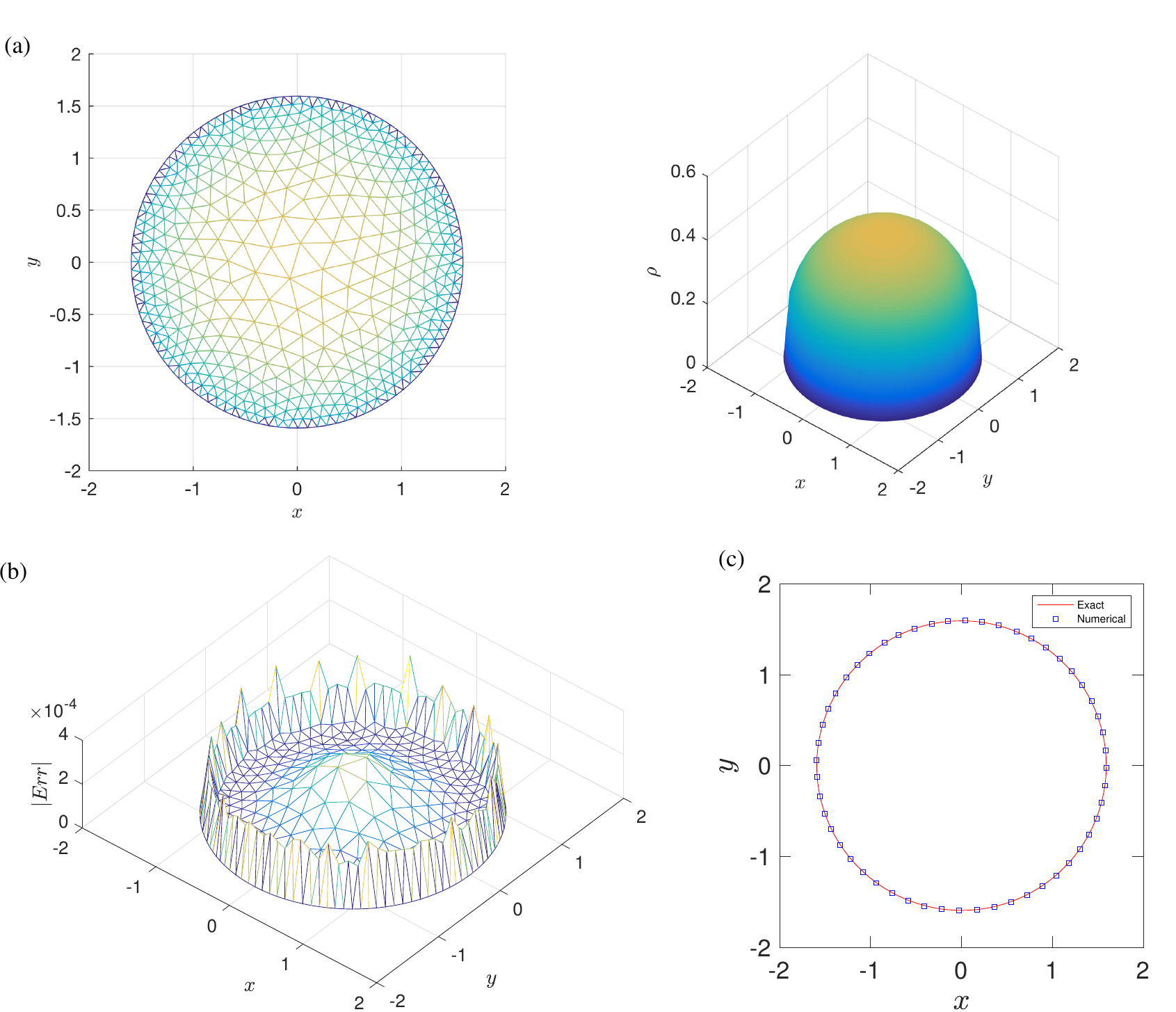}
    \caption{(a) Numerical solution at $T = 1$ (non-uniform mesh, $N = 516$) for the PME ($\alpha = 4$) with the initial data $B_4(\x, 1)$ ($C_0 = 0.1$) by scheme 2. (b) Numerical error for the numerical solution show in (a) at $T = 1$. (c) The numerical interface at $T = 1$ [shown by red circle] compared to the exact interface [blue line] for the PME ($\alpha = 4$) with the initial data $B_4(\x, 1)$ ($C_0 = 0.1$). }\label{PM_a_4} 
 \end{figure}

We take Barenblatt solution (\ref{2D_B}) for $C_0 = 0.1$ at $t = 1$ as the initial data, and compare the numerical solution at $T$ with the exact solution $B_{\alpha}(\x, T + 1)$.
Since the Barenblatt solution is steeper near the interface, we use the non-uniform mesh in $\Omega_0$ in our 2D simulation, which can largely reduce computational cost. The initial non-uniform mesh in $\Omega_0$ is generated by DistMesh \cite{persson2004simple}.

Fig. \ref{PM_a_4} shows the numerical solutions for $\alpha = 4$ at $T = 1$ in a non-uniform mesh with $N = 516$. The numerical error at $T = 1$ are plot in Fig. \ref{PM_a_4}(b), which indicates that the $L^{\infty}$-norm of the numerical solution is $O(10^{-4})$ for $N = 516$. The numerical and exact interfaces at $T = 1$ are shown in Fig. \ref{PM_a_4}(c), which demonstrated that our numerical scheme can track the movement of free boundary in 2D in high accuracy with a small $N$.


\begin{figure}[!htb]

  \includegraphics[width = \linewidth]{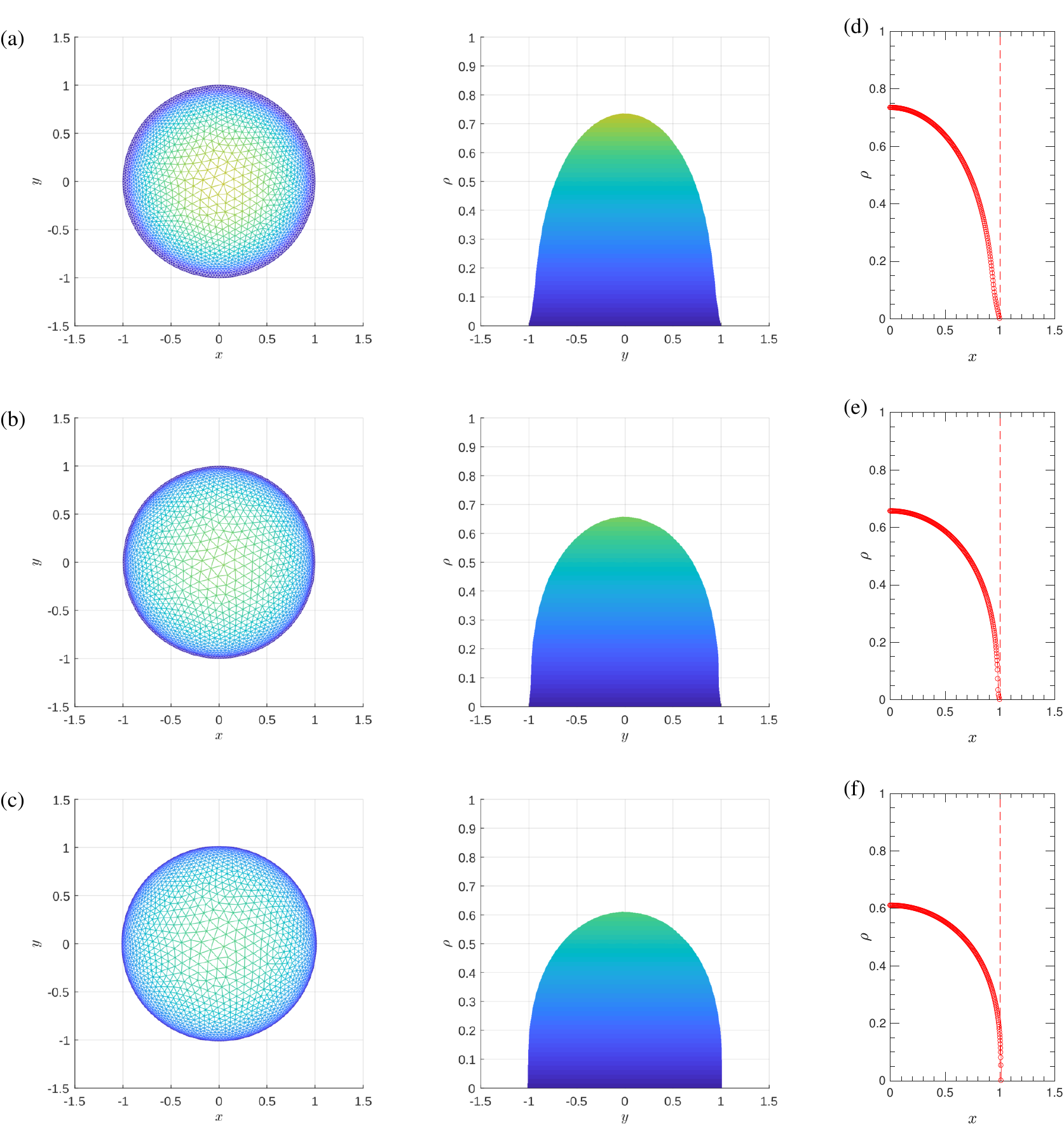}
       \caption{Numerical results for the initial data (\ref{WT_2D})  obtained by the 2D simulation [(a)-(c)] and 1D simulation with the axisymmetric assumption [(d)-(f)] at $t = 0.05, 0.1$ and $t = 0.15$.}\label{WT_2D}
\end{figure}

We now test the converge rate of 2D Barenblatt solution. The error for numerical solutions with $\rho_0(\X) = B_{\alpha}(\X, 1)$ at $T = 0.1$  in $\mathcal{L}^2$-norm for both $\alpha  = 2$ and $\alpha = 4$ is shown in Table \ref{Table3}. It shows that we can achieve a second-order convergence rate in space for both $\alpha  = 2$ and $\alpha = 4$ in $\mathcal{L}^2$-norm. 
\begin{table}[!h]
  \begin{center}
    {\footnotesize
  \begin{tabular}{ c  c | c  c | c  c |  c  c }
    \hline
    \multicolumn{4}{c|}{$\alpha = 2$}                         &  \multicolumn{4}{c}{$\alpha = 4$}  \\ \hline
    N      &  $\tau$  &  $\mathcal{L}^2$-error   & Order                &  N    &  $\tau$    & $\mathcal{L}^2$-error & Order     \\ \hline
    132    &  1/100   &  6.5388e-04    &         &  135     &  1/100      &  0.0066       &               \\ \hline
    524    &  1/400   &  1.6053e-04    & 2.0262  &  516  &  1/400      & 5.5299e-04    &  1.7807                \\  \hline  
    2103   &  1/1600  &  3.9867e-05    & 2.0096  & 2124  &  1/1600     & 1.6518e-04   &  1.9982          \\  \hline        
  \end{tabular}
  }
  \caption{The convergence rate of numerical solutions with $\rho_0(\X) = B_{\alpha}(\X, 1)$ at  $T = 0.1$ for the PME with $\alpha = 2$ and $\alpha = 4$.}
  \label{Table3}
\end{center}
\end{table}

\subsubsection{Waiting time}

Now, we apply our numerical scheme (scheme 2) to the PME with the initial data has a waiting-time phenomenon in two-dimensional situations. We take the initial value as:
\begin{equation}\label{WT_2D}
  \rho_0(X, Y) =
  \begin{cases}
    \cos ( \frac{\pi}{2} \sqrt{X^2 + Y^2} ), \quad & \text{for}~ \sqrt{X^2 + Y^2} \leq 1, \\
    0,  & \text{otherwise}, \\
  \end{cases}
\end{equation}
which has a positive waiting time. 

The numerical solutions for this initial data at various time are shown in Fig. \ref{WT_2D} [$N = 2105$ and $\tau = 10^{-3}$].  We use the non-uniform mesh in $\Omega_0$, which is dense around the free boundary. In order to validate our numerical results, we also compute the same initial date within the axisymmetric assumption, which can reduce the problem into a one-dimensional problem and enable us to apply the criterion in \cite{duan2019numerical} to estimate the waiting time. The numerical solutions obtained within the axisymmetric assumption for $N = 201$ and $\tau = 10^{-4}$ are shown in Fig. \ref{WT_2D} (d)-(f). 
\begin{figure}[!htb]

  \includegraphics[width = \linewidth]{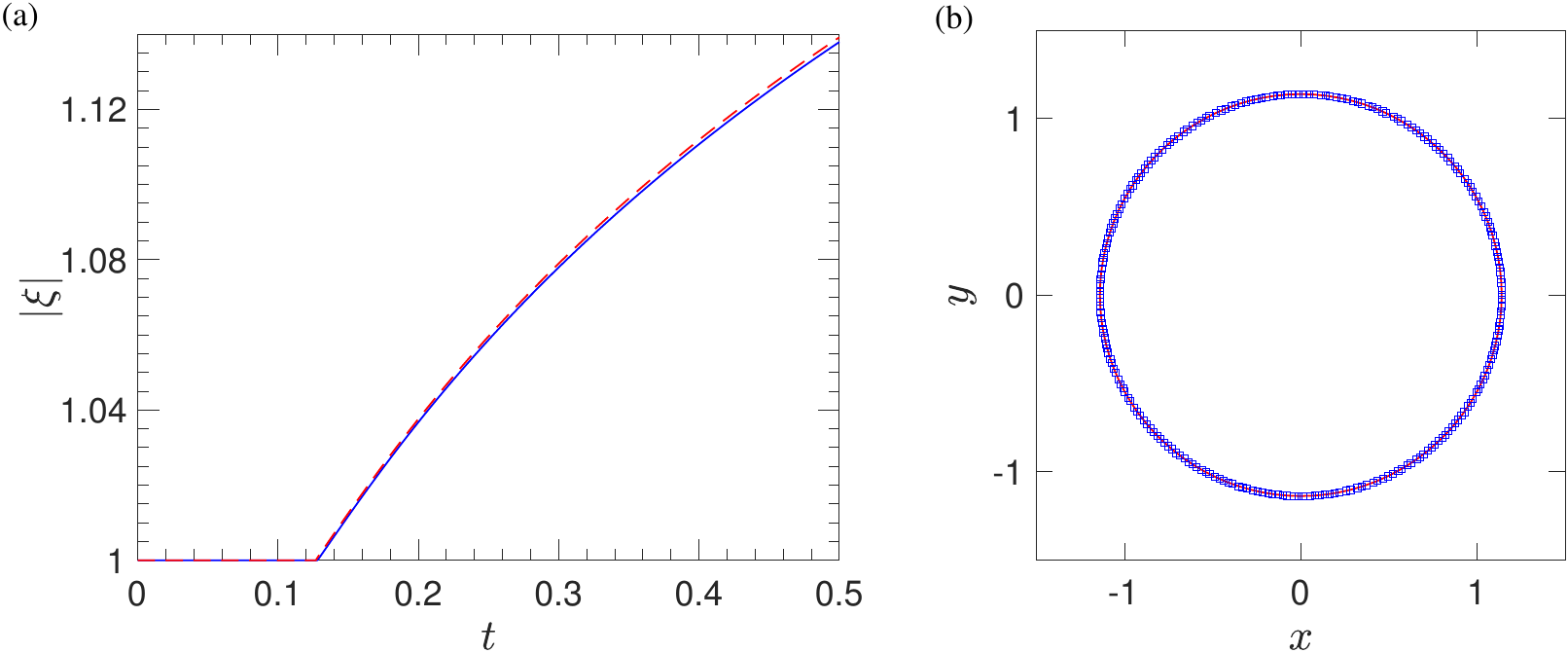}
      \caption{(a) The location of the free boundary obtained by the 2D simulation [blue solid line] and 1D simulation with the axisymmetric assumption [red dashed line]. (b) The free boundary at $t = 0.5$ by the 2D simulation [square] and 1D simulation with the axisymmetric assumption [solid line].  }\label{WT_NI_2D}
\end{figure}

The location of free boundary by the 2D simulation [blue solid line] and 1D simulation with the axisymmetric assumption [red dashed line] is plotted in Fig. \ref{WT_NI_2D}(a), where we define the location of free boundary in 2D simulation results by $$|\xi| = \max( \min \{ |\x_h(\X_b)| ~|~ \X_b \in \pp \Omega_0  \}, \xi_0),$$
where $\xi_0$ is the radius of the initial support. Both results indicate the wait time is around $0.128$. Fig. \ref{WT_NI_2D}(b) plot the  the numerical interface at $t = 0.5$ for both approaches, in which the free boundary in the 2D simulation result is shown by blue-square and the result obtained by 1D simulation is shown in red solid-line . It can be concluded that the numerical solutions obtained by both approaches are consistent, which validates our numerical scheme 2 in 2D, 
and we can have a clear estimation to the waiting time in 2D from the numerical interface obtained by the scheme 2. 

\subsubsection{Complex Support}

\begin{figure}[!htb]

  \centering
  \includegraphics[width = 0.9\linewidth]{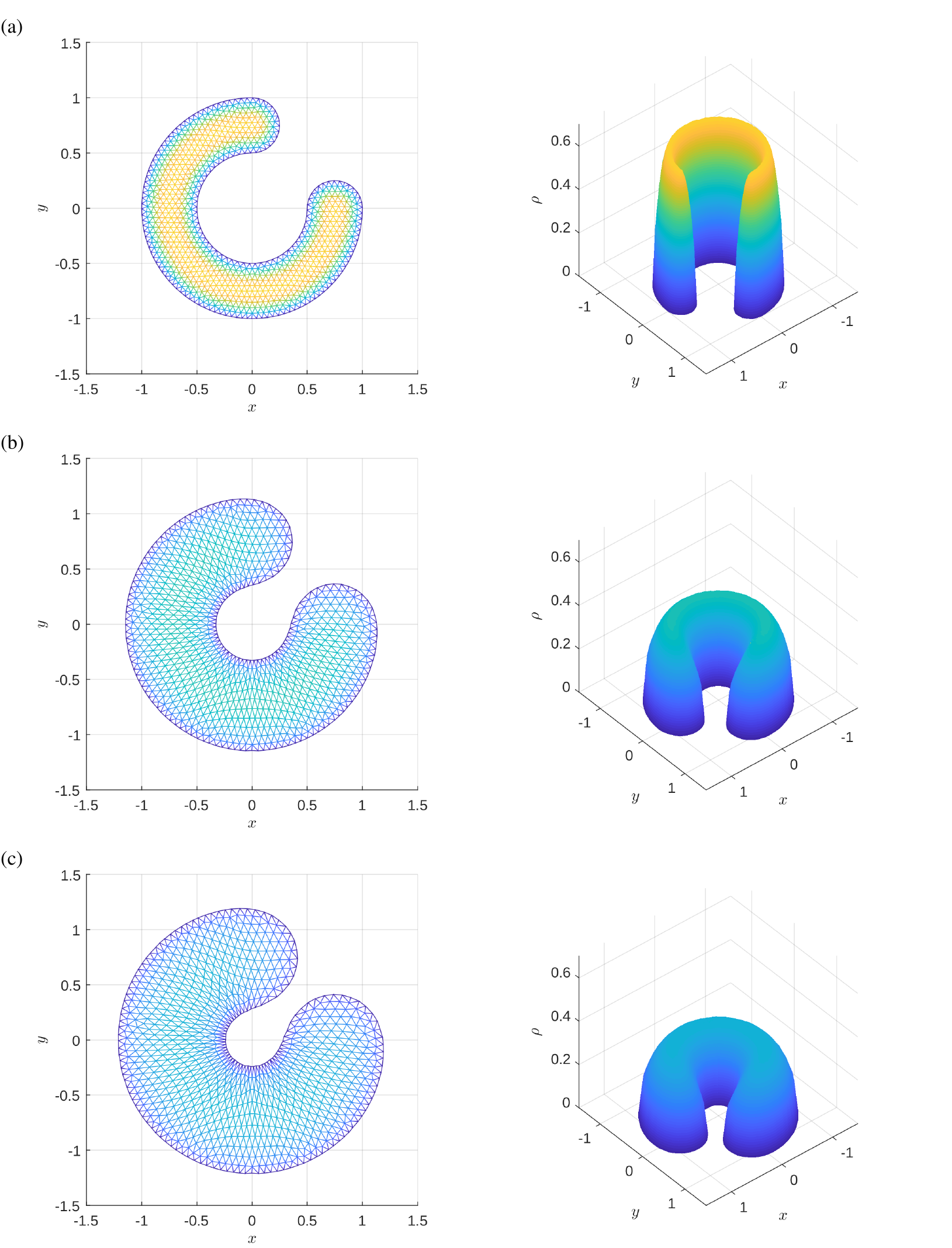}

  \caption{Numerical solutions of the PME with $\alpha = 3$ for the initial condition (\ref{DT_ini}) by scheme 2 at various time [$N = 910$, $\tau = 10^{-2}$]: (a) $t = 0$, (b) $t = 0.1$, (c) $t = 0.2$. }\label{DT}
\end{figure}

\begin{figure}[!htb]

  \centering
  \includegraphics[width = 0.9\linewidth]{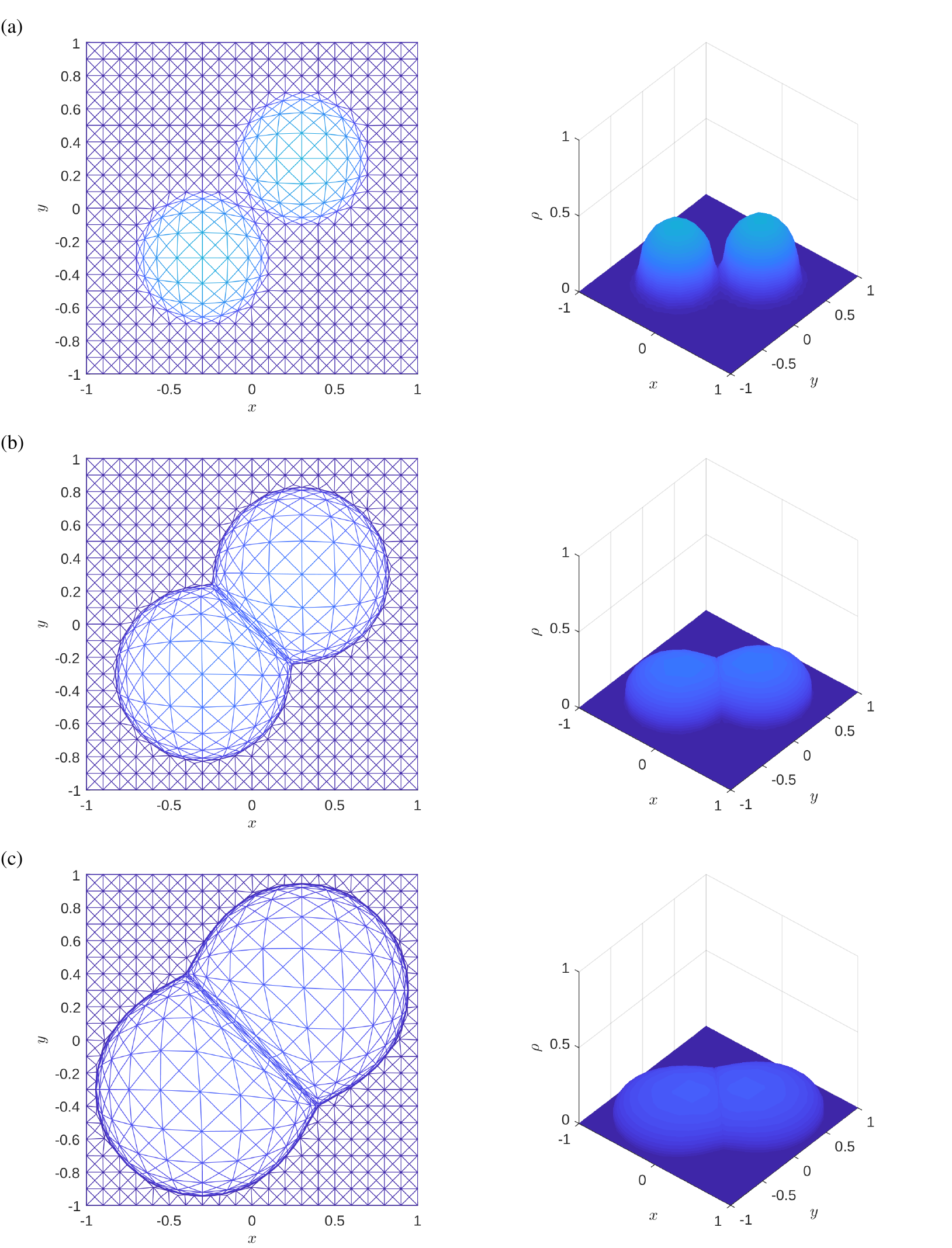}
     \caption{Numerical solutions for the PME with $\alpha = 4$ for the initial condition (\ref{DT_ini}) at various time by scheme 2: (a) $t = 0.1$, (b) $t = 1$, (c) $t = 5$. The initial mesh is a uniform mesh on $\Omega = [-1, 1]^2$ [$N =841$, $\tau = 10^{-2}$] .} \label{PM}

\end{figure}

Next we consider examples with complex compact supports in 2D. We first take the initial data as
\begin{equation}\label{DT_ini}
  \rho_0(\X)^{\alpha -1} =
  \begin{cases}
    & 25(0.25^2 - (\sqrt{X^2 + Y^2} - 0.75)^2)^{\frac{3}{2}}, \quad \sqrt{X^2 + Y^2} \in [0.5, 1]~\text{and}~(X < 0~\text{or}~Y < 0), \\
    & 25(0.25^2  - X^2 - (Y - 0.75)^2)^{\frac{3}{2}}, \quad X^2 + (Y - 0.75)^2 \leq 0.25^2 ~\text{and}~X \geq 0, \\
    & 25(0.25^2  - (X - 0.75)^2 - Y^2)^{\frac{3}{2}}, \quad (X - 0.75)^2 + Y^2 \leq 0.25^2 ~\text{and}~Y \geq 0, \\
    & 0, \quad \text{otherwise}, \\
  \end{cases}
\end{equation}
which has a partial donut-shaped support. Similar examples are studied in \cite{baines2005moving, ngo2018adaptive}.

Fig. \ref{DT} shows the numerical solutions of the PME with $\alpha = 3$ for the initial condition (\ref{DT_ini}) at various time, which are computed by scheme 2 with $N = 910$ and $\tau = 10^{-2}$. One can see that our method works well for the concave domain. However, as pointed out in \cite{baines2005moving}, a Lagrangian method can not handle the topological change automatically, which is also a limitation of our numerical approach. For this example, since the domain will evolve to reach a point where two ends of the ``horseshoe'' intersecting each other, the tangling of mesh cannot be avoided after this point.
In order to get solutions beyond this point, one can manually interpolate the solution on to a new mesh, which can be viewed as an update to $\rho_0(\X)$ (along with the mesh).

In our last numerical example, we study a peaks merge problem for the PME with $\alpha = 4$, in which the initial data has two peaks, connected by a thin layer of mass. Similar numerical experiments are studied in \cite{ngo2017study, carrillo2018lagrangian}.

Let $\Omega = [-1, 1]^2$, consider the initial data
\begin{equation}
\rho_0(X, Y) = e^{-20 ((X-0.3)^2 + (Y - 0.3)^2)} + e^{-20((X + 0.3)^2 + (Y + 0.3)^2)} + 0.001.
\end{equation}
The boundary condition on $\pp \Omega$ is the Neumann boundary condition. In our numerical simulation, we take $\Omega_0 = \Omega$ and manually set the velocity of the nodes on $\pp \Omega_0$ to be zero for the Neumann boundary condition. Fig. \ref{PM} shows the numerical solutions by scheme 2 at various time ($t = 0.1, 1$ and $5$) with a uniform initial mesh in $\Omega_0$ [$N =841$, $\tau = 10^{-2}$]. It can be seen that the mesh is concentrated around the ``interface'', in which the solution is steep, during the evolution. One can view our approach as a kind of moving mesh method. Unlike most of traditional moving mesh approach, in which both updating the mesh and solving the equation in the new mesh are required, we only need to solve the equation of the flow map, i.e., the equation of the mesh, the numerical solutions are determined by the kinematic relation.   
Similar to \cite{carrillo2018lagrangian}, it is a surprise that our numerical method can handle the situation of ``peaks merge'', although local coarsening and remeshing is still needed in order to get better numerical solutions.



\section{Summary}

Structure-preserving and adaptive are two important aspects in designing efficient numerical methods for PDEs arising in numerous physical and biological modeling.
 It is usually a difficult task to construct a numerical scheme to a conservative or dissipative PDE that retain the conservation/dissipation properties in a discrete sense \cite{furihata2010discrete}. 
 In this paper, we proposed a general framework to derive an efficient structure-preserving numerical scheme by employing a discrete energetic variational approach.
A discrete energetic variational approach provides basis of deriving the ``semi-discrete equations'' after introducing a proper spatial discretization to the given energy-dissipation law, and can be applied to a large class of partial differential equations with energy-dissipation laws and kinematic relations, such as generalized diffusion equations, phase-field equations, and equations of liquid crystal.
Within a piecewise linear approximation to the flow map, our approach is capable of handling high spatial dimensional situations.
 As an application, we develop two numerical schemes for the PME based on different energy-dissipation laws. 
 By performing numerical experiments in both 1D and 2D, we show that the numerical scheme based on the energy-dissipation law (\ref{New_Energy}) can better capture the free boundary and estimate the waiting time for the PME, without explicitly tracking the movement of the free boundary. Although our Lagrangian scheme performs well in numerical tests, a detailed numerical analysis of such schemes are required. We are working hard in this direction. As mentioned in the sect. 3, there is essential difference between Lagrangian schemes in 1D and high dimensions since both $\det F$ and the admissible set are not convex, which impose challenges in numerical analysis.





On the notion of our numerical approach, large deformations and topological changes can be difficult to handle by the dynamics of the flow map, usually with higher nonlinearity, degeneracy, or possible singularities. Furthermore, for general problems, a proper ``initial data'' may not be available. In order to solve these problems, we will employ a hybrid method, which solves the original equation in Eulerian and Lagrangian coordinates alternatively, in the ongoing work. In other word, during the evolution of the flow map, we can update the ``initial data'' (``reference data'') and the mesh, where the ``initial data'' is obtained by some Eulerian methods.

Finally, in the current approach, our piecewise linear approximation to the flow map is based on a finite element method, which provides us a simple framework to compute the deformation matrix $F$ on each element explicitly, but it is not obvious to incorporate the local coarsening and remeshing into it. One possible idea is to incorporate the particle methods \cite{chertock2017practical} into our framework, in which remeshing for particle distortion can be easily dealt with, although how to compute the deformation matrix $F$ might still be a challenge.


\section*{Acknowledgment}
The authors acknowledge the partial support of NSF (Grant DMS-1759536). We thank Prof. Cheng Wang, Prof. Xingye Yue, Dr. Chenghua Duan, Dr. Pei Liu and Dr. Qing Cheng for helpful discussions.  Y. Wang would also like to thank Department of Applied Mathematics at Illinois Institute of Technology for their generous support and for a stimulating environment.



\appendix

\section{Explicit form of $F_e$ in 2D}
In the appendix, we give the explicit form of $F_e$ as a function of  $a_{en(e, l)}$ and $b_{en(e, l)}$ ($l = 1, 2, 3$) on each element $\tau_e$.

For a given element $\tau_e$, we denote the nodes of it by $\X^e_l$, and we let $\avec^e_l = (a_l^e, b_l^e) \triangleq \avec_{en(l, e)}$ ($l = 1, 2, 3$). The discrete flow map on $\tau_e$ can be written as
\begin{equation}\label{nodal_tau_e}
  \x_h(\X) = \sum_{l = 1}^{3} \avec_l^e \lambda_{l}^e(\X), \quad \X \in \tau_e,
\end{equation}
where $\lambda^e_{l} \in P_1(\tau_e)$ is a nodal basis on $\tau_e$, which satisfies $\lambda^e_{l}(\X^e_{m}) = \delta_{lm}$ ($l, m  = 1, 2 , 3$).

We can compute $\lambda_{l}^e(\X)$ by mapping $\tau_e$ into the reference triangle $\tau_{s} = \{ \hat{\X} = (\hat{X}, \hat{Y}) \in \mathbb{R}^2 ~:~ \hat{X} \geq 0, \hat{X} + \hat{Y} \leq 1  \}$ with nodes $\hat{\X}_1^s = (0, 0), \hat{\X}_2^s = (1, 0)$, and $\hat{\X}_3^s = (0, 1)$. The map between $\tau_{s}$ to $\tau_e$ is given by
\begin{equation}
\X = L_e(\hat{\X}) = A_e \hat{\X} + b_e,
\end{equation}
where
\begin{equation}
  A_e = 
  \begin{pmatrix}
    X^e_2 - X^e_1 &  X^e_3 - X^e_1 \\
    Y^e_2 - Y^e_1 &  Y^e_3 - Y^e_1 \\
  \end{pmatrix},
   \quad b_e = \X^e_1.
\end{equation}
Hence,
\begin{equation}
\lambda_{l}^e(\X) = \lambda^{s}_{l} (L_e^{-1} (\X)), \quad l = 1, 2, 3.
\end{equation}

Since the nodal basis on $\tau_s$ is given by
\begin{equation}
\lambda_1^s(\hat{X}, \hat{Y}) = 1 - \hat{X} - \hat{Y}, \quad \lambda_2^s(\hat{X}, \hat{Y}) = \hat{X}, \quad \lambda_3^s(\hat{X}, \hat{Y}) = \hat{Y},
\end{equation}
we have
\begin{equation}
\nabla_{\X} \lambda^e_{l}(\X) =  A_e^{-\rm{T}} \nabla_{\xi}\lambda^{s}_{l}(\hat{\X}),
\end{equation}
where
\begin{equation*}
  \nabla_{\xi} \lambda^{s}_1 = 
   \begin{pmatrix}
      -1 \\
     -1 \\
   \end{pmatrix},
   \quad
     \nabla_{\xi} \lambda^{s}_2 = 
   \begin{pmatrix}
     1 \\
     0 \\
   \end{pmatrix},
   \quad
     \nabla_{\xi} \lambda^{s}_3 = 
   \begin{pmatrix}
     0 \\
     1 \\
   \end{pmatrix},
   \quad
   A_e^{-1} = \frac{1}{\det A_e}
      \begin{pmatrix}
     & Y_3^e - Y_1^e & X_1^e - X_3^e \\
     & Y_1^e - Y_2^e & X_2^e - X_1^e \\
   \end{pmatrix}.
\end{equation*}

Then, we can compute $F_e$ as a function of $a_l^e$ and $b_l^e$ on $\tau_e$ directly by (\ref{nodal_tau_e}), which is 
\begin{equation}
  \begin{aligned}
    F_e (a_l^e, b_l^e) 
   &  =  \sum_{l = 1}^{3} \avec_{en(l, e)} \otimes \nabla_{X} \lambda_{\alpha}^e(\X) \\  
 &  =    \begin{pmatrix}
     a_e^1 &  a_e^2 &  a_e^3\\
     b_e^1 &  b_e^2 & b_e^3\\
    \end{pmatrix} \nabla_{\xi} \bm{\lambda}^{s}(\hat{\X})A_e^{-1} \\
    & = \frac{1}{\det A_e} \begin{pmatrix}
     a_e^1 Y_{2-3}^e +  a_e^2Y_{3-1}^e +  a_e^3Y_{1-2}^e &  a_e^1  X_{3-2}^e +  a_e^2 X_{1-3}^e +  a_e^3 X_{2-1}^e  \\
     b_e^1 Y_{2-3}^e +  b_e^2Y_{3-1}^e +  b_e^3 Y_{1-2}^e  & b_e^1 X_{3-2}^e +  b_e^2 X_{1-3}^e +  b_e^3 X_{2-1}^e  \\
    \end{pmatrix},
  \end{aligned}
\end{equation}
where $X_{l - m}^e = X_{\l}^e - X_{m}^e$, $Y_{l - m}^e = Y_{l}^e - Y_{m}^e$ ($l, m  = 1, 2, 3$).

\section{Numerical Implementation in 2D}
In this appendix, we present the detailed numerical implementation for the numerical scheme (\ref{Scheme_2D}), which cannot be written down in an explicit form in a general mesh as $\frac{\delta \mathcal{A}}{\delta a_i}({\bm a}^{n+1}, {\bm b}^{n+1})$ ($\frac{\delta \mathcal{A}}{\delta b_i}({\bm a}^{n+1}, {\bm b}^{n+1})$) and ${\sf M}^*({\bm a}^n, {\bm b}^n)$ depend on the triangulation $\mathcal{T}_h$. Since we have the explicit form of $F_e$ as a function of $a_{en(e, l)}$ and $b_{en(e, l)}$ on
  each element $\tau_e$ (shown in the Appendix A), we can compute $\frac{\delta \mathcal{A}}{\delta a_i}({\bm a}^{n+1}, {\bm b}^{n+1})$ ($\frac{\delta \mathcal{A}}{\delta b_i}({\bm a}^{n+1}, {\bm b}^{n+1})$) and ${\sf M}^*({\bm a}^n, {\bm b}^n)$ in the numerical implementation by using the standard technique in finite element methods, that is, summing the results on each element over the mesh \cite{larson2013finite}. For instance,  $\frac{\delta \mathcal{A}_h}{\delta a_i}(\bm{a}^{n+1}, \bm{b}^{n+1})$ ($\frac{\delta \mathcal{A}_h}{\delta b_i}(\bm{a}^{n+1}, \bm{b}^{n+1})$) and ${\sf M}^{*}(\bm{a}^n, \bm{b}^n)$ in our scheme 2 [scheme (\ref{Scheme_2D}) with (\ref{PME_scheme_2})] can be computed by Algorithm \ref{Algorithm} and Algorithm \ref{calc_M}. 
  We can have an explicit form of $\frac{\delta \mathcal{A}_h}{\delta a_i}(\bm{a}^{n+1}, \bm{b}^{n+1})$ ($\frac{\delta \mathcal{A}_h}{\delta b_i}(\bm{a}^{n+1}, \bm{b}^{n+1})$) and ${\sf M}^{*}(\bm{a}^n, \bm{b}^n)$  in a uniform triangulation in a rectangular domain by applying Algorithm \ref{Algorithm} and Algorithm \ref{calc_M}.

\begin{algorithm}[!h]
  {\footnotesize
    \caption{ {\bf Assembly of $\frac{\delta \mathcal{A}_h}{\delta a_i} (\bm{a}^{n+1}, \bm{b}^{n+1})$ or $\frac{\delta \mathcal{A}_h}{\delta b_i} (\bm{a}^{n+1}, \bm{b}^{n+1})$}} \label{Algorithm}
    \SetKwData{Left}{left}\SetKwData{This}{this}\SetKwData{Up}{up}
    \SetKwFunction{Union}{Union}\SetKwFunction{FindCompress}{FindCompress}
    
      \For{$e = 1, 2, \ldots M$}{
      Compute $\frac{\delta \mathcal{A}_h^e}{\delta a_{en(e, l)}}(\bm{a}^{n+1}, \bm{b}^{n+1})$ and $\frac{\delta \mathcal{A}_h^e}{\delta b_{en(e, l)}} (\bm{a}^{n+1}, \bm{b}^{n+1})$ on $\tau_e$ by (\ref{d_action_el}) for $l = 1, 2, 3$.

      \For{$l = 1, 2, 3$}{
        %
              
        \begin{equation*}
          \begin{aligned}
          & \frac{\delta \mathcal{A}_h}{\delta a_{en(e, l)}} (\bm{a}^{n+1}, \bm{b}^{n+1}) = \frac{\delta \mathcal{A}_h}{\delta a_{en(e, l)}} (\bm{a}^{n+1}, \bm{b}^{n+1})  + \frac{\delta \mathcal{A}_h^e}{\delta a_{en(e, l)}}(\bm{a}^{n+1}, \bm{b}^{n+1}) \\
              &  \frac{\delta \mathcal{A}_h}{\delta b_{en(e,l)}} (\bm{a}^{n+1}, \bm{b}^{n+1}) = \frac{\delta \mathcal{A}_h}{\delta b_{en(e, l)}} (\bm{a}^{n+1}, \bm{b}^{n+1})  + \frac{\delta \mathcal{A}_h^e}{\delta b_{en(e, l)}}(\bm{a}^{n+1}, \bm{b}^{n+1}) \\
        \end{aligned}
        \end{equation*}
      }

      }
      }
\end{algorithm}

\begin{algorithm}[!h]
  {\footnotesize
    \caption{{\bf Assembly of ${\sf M}^{*}({\bm a}^n, {\bm b}^n)$}} \label{calc_M}
    \SetKwData{Left}{left}\SetKwData{This}{this}\SetKwData{Up}{up}
    \SetKwFunction{Union}{Union}\SetKwFunction{FindCompress}{FindCompress}
      \For{$e = 1, 2, \ldots M$}{
        On $\tau_e$, compute $M^e({\bm a}^n, {\bm b}^n) = \frac{1}{12}
          \begin{pmatrix}
            2 & 1 & 1 \\
            1 & 2 & 1 \\
            1 & 1 & 2 \\
          \end{pmatrix} |\tau_e| \det F_e^n$.
    
      
      \For{$l = 1, 2, 3$}{
        \For{$m = 1, 2, 3$}{
          $$M_{en(e,l) en(e, m)}^n({\bm a}^n, {\bm b}^n) = M_{en(e,l) en(e, m)}({\bm a}^n, {\bm b}^n) + M^e_{lm}({\bm a}^n, {\bm b}^n)$$
        }
      }
      }
      }
\end{algorithm}

\bibliographystyle{plainnat}
\bibliography{PME}

\end{document}